\newtheorem{theorem}{Theorem}
\newtheorem{proposition}[theorem]{Proposition}
\theoremstyle{remark}
\newtheorem{remark}{Remark}
\newcommand{\R}{\mathbb{R}}
\newcommand{\X}{X}
\newcommand{\bA}{\bar{A}}
\newcommand{\Ps}{\mathbf{P}}
\DeclareMathOperator*{\argmin}{arg\,min}
\title{The optimal control of storage for arbitrage
    and buffering, with energy applications}
\author{James Cruise\footnote{Heriot-Watt University.  Research
    supported by EPSRC grant EP/I017054/1}
  \ and Stan Zachary\footnotemark[1]}
\date{\today}
\begin{document}

\maketitle

\begin{abstract}
  We study the optimal control of storage which is used for both
  arbitrage and buffering against unexpected events, with particular
  applications to the control of energy systems in a stochastic and
  typically time-heterogeneous environment.  Our philosophy is that of
  viewing the problem as being formally one of stochastic dynamic
  programming, but of using coupling arguments to provide good
  estimates of the costs of failing to provide necessary levels of
  buffering.  The problem of control then reduces to that of the
  solution, dynamically in time, of a deterministic optimisation
  problem which must be periodically re-solved.  We show that the
  optimal control then proceeds locally in time, in the sense that the
  optimal decision at each time~$t$ depends only on a knowledge of the
  future costs and stochastic evolution of the system for a time
  horizon which typically extends only a little way beyond~$t$.  The
  approach is thus both computationally tractable and suitable for the
  management of systems over indefinitely extended periods of time.
  We develop also the associated strong Lagrangian theory (which may
  be used to assist in the optimal dimensioning of storage), and we
  provide characterisations of optimal control policies.  
  We give examples based on Great Britain electricity price data.
\end{abstract}

\section{Introduction}
\label{sec:introduction}

The control of complex stochastic systems, for example modern power
networks which must cope with many sources of uncertainty in both
generation and demand, requires real-time optimisation of decision
problems which are often computationally intractable---notably so in a
time-heterogeneous environment.  This clearly also poses difficulties
for the design of such systems.  As in the case of the well studied
areas of communication and manufacturing networks, our belief is that
what is required is the careful specification of the stochastic models
governing the behaviour of such systems, coupled with the analytical
derivation of accurate approximation techniques.

In the present paper we use an economic framework to consider the
optimal control of a single storage facility.  The problem is made
interesting because, at least in power networks, storage may be
simultaneously used for many different purposes, with potentially
conflicting objective functions.  However, if storage is to be
economically viable, it must be capable of meeting these competing
objectives.  We concentrate on energy storage in a time-heterogeneous
environment, and consider two of the main uses of such storage
systems: (a) price arbitrage, i.e.\ the buying and selling of energy
over time (whether to earn revenue for the store owner or for the
benefit of the consumer), and (b) the provision of buffering services,
so as to react rapidly to sudden and unexpected changes, for example
the loss of a generator or transmission line, or a sudden surge in
demand.  Our general approach is likely to be applicable to other uses
of storage, and also to the optimal control of other facilities used
for the provision of multiple services.

There is considerable literature on the control of storage for each of
the above two purposes considered on its own.  In the case of the use
of storage for arbitrage, and with linear cost functions for buying
and selling at each instant in time, the problem of optimal control is
the classical \emph{warehouse problem} (see \cite{Cahn, Bell, Drey}
and also \cite{Sec2010} for a more recent example).  Cruise et al
\cite{CFGZ} consider the optimal control of storage---in both a
deterministic and a stochastic setting---in the case where the store
is a price maker (i.e.\ the size of the store is sufficiently large
that its activities influence prices in the market in which it
operates) and is subject to both capacity and rate constraints; they
develop the associated Lagrangian theory, and further show that the
optimal control at any point in time usually depends only on the cost
functions associated with a short future time horizon.  Recent
alternative approaches for studying the value and use of storage for
arbitrage can be found in the
papers~\cite{KHT,PADS,SDJ,VHMS,WAM}---see also the text~\cite{WW}, and
the further references given in~\cite{CFGZ}.  For an assessment of the
potential value of energy storage in the UK electricity system
see~\cite{TMTSBP}.

There have been numerous studies into the use of storage for buffering
against both the increased variability and the increased uncertainty
in electrical power systems, due to higher penetration of renewable
penetration---the former due to the natural variability of such
resources as wind power, and the latter due to the inherent
uncertainty of forecasting.  These studies have considered many
different more detailed objectives; these range from the sizing and
control of storage facilities co-located with the renewable generation
so as to provide a smoother supply and so offset the need for network
reinforcement \cite{CL, DS, KHH}, to studies on storage embedded
within transmission networks so as to increase wind power utilisation
and so reduce overall generation costs \cite{HD,RJM, TO}.  In addition
there have been a number of studies into the more general use of
storage for buffering, for example, so as to provide fast frequency
response to power networks \cite{MMA, OCO, TMTSBP}, or to provide
quality of service as part of a microgrid \cite{BLLBP,HMN}.

In general the problem of using a store for buffering is necessarily
stochastic.  The natural mathematical approach is via stochastic
dynamic programming.  This, however, is liable to be computationally
intractable, especially in the case of long time horizons and the
likely time heterogeneity of the stochastic processes involved.
Therefore much of the literature considers necessarily somewhat
heuristic but nevertheless plausible control policies---again often
adapted to meeting a wide variety of objectives.  For example, for
storage embedded in a distribution network, two control policies are
considered in \cite{BI1}; the first policy aims to feed into a store
only when necessary to keep local voltage levels within a predefined
range and to empty the store again as soon as possible thereafter; the
second policy aims to maintain a constant level of load in the
network.  For larger stores operating within transmission networks,
the buffering policies studied have included that of a fixed target
level policy \cite{BGK}, a dynamic target level policy \cite{GTL}, and
a two stage process with day ahead generation scheduling and a online
procedure to adapt load levels \cite{AA}.

Control policies have been studied via a range analytic and simulation
based methods.  Examples of an analytic approach can be found in
\cite{HPSPB}, where partial differential equations are utilised to
model the behaviour and control of a store, and in \cite{BI1, BI2},
where spectral analysis of wind and load data is used with models
which also incorporate turbine behaviour.  Simulation-based studies
include \cite{BGK, GTL}, which use a bootstrap approach based on real
wind forecast error data, and \cite{AA}, which uses Monte Carlo
simulation of the network state.

In the present paper we study the optimal control of a store which is
used both for arbitrage and for buffering against unpredictable
events.  As previously indicated we use an economic framework, so that
the store sees costs (positive or negative) associated with buying and
selling, and with the provision of buffering services.  The store
seeks to operate in such a way as to minimise over time the sum of
these costs.  We believe such an economic framework to be natural when
the store operates as part of some larger and perhaps very complex
system, provided the price signals under which the store operates are
correctly chosen.  The store may be sufficiently large as to have
market impact, leading to nonlinear cost functions for buying and
selling, may be subject to rate (as well as capacity) constraints,
and, as will typically be the case, may suffer from round-trip
inefficiencies.  We formulate a stochastic model which is realistic in
many circumstances and characterise some of the properties of an
optimal control, relating the results to the existing experimental
literature.  We develop the associated strong Lagrangian theory and,
by making a modest approximation---the validity of which may be tested
in practical applications---show how to construct a computationally
tractable optimal control.  These latter results form a nontrivial
extension of those of the ``arbitrage-only'' case studied
in~\cite{CFGZ}, and require significant new developments of the
necessary optimization theory; as in~\cite{CFGZ}, the optimal control
at any time usually depends on a relatively short time horizon (though
one which is typically somewhat longer than in the earlier case), so
that the algorithm is suitable for the optimal control of the store
over an indefinite period of time.

The optimal control is given by the solution, at the start of the
control period, of a deterministic optimisation problem which can be
regarded as that of minimising the costs associated with the store
buying and selling added to those of notionally ``insuring'' for each
future instant in time against effects of the random fluctuations
resulting from the provision of buffering services.  The cost of such
``insurance'' depends on the absolute level of the store at that time.
Thus this deterministic problem is that of choosing the vector of
successive levels of the store so as to minimise a cost function
$\sum_t[C_t(x_t)+A_t(s_t)]$, subject to rate and capacity constraints,
where $C_t(x_t)$ is the cost of incrementing the level of the store
(positively or negatively) at time~$t$ by $x_t$, and the ``penalty''
function~$A_t$ is such that $A_t(s_t)$ is the expected cost of any
failure to provide the required buffering services at the time~$t$
when the level of the store is then $s_t$.  We define this
optimisation problem~$\Ps$ more carefully in
Sections~\ref{sec:problem} and \ref{sec:simpl-optim-probl}.  In the
stochastic environment in which the store operates, the solution of
this deterministic problem determines the future control of the store
until such time as its buffering services are actually required,
following which the level of the store is perturbed and the
optimisation problem must be re-solved starting at the new level.  The
continuation of this process provides what is in principle the exactly
optimal stochastic control of the store on a potentially indefinite
time scale.

In Section~\ref{sec:problem} we formulate the relevant stochastic
model and discuss its applicability.  This enables us, in
Section~\ref{sec:char-optim-solut} to provide some characteristic
properties of optimal solutions, which we relate to empirical work in
the existing literature.  In Sections~\ref{sec:determ-funct-a_t-2} and
\ref{sec:simpl-optim-probl} we develop the approach to an optimal
control outlined above.  Section~\ref{sec:solution} considers the
deterministic optimisation problem associated with the stochastic
control problem and derives the associated strong Lagrangian theory,
while in Section~\ref{sec:algorithm} we develop an efficient
algorithm.  Section~\ref{sec:example} gives examples.

\section{Problem formulation}
\label{sec:problem}

Consider the management of a store over a finite time interval $[0,T]$
where the time horizon $T$ is integer, and where $[0,T]$ is divided
into a succession of periods $t=1,\dots,T$ of integer length.  At the
start of each time period~$t$ the store makes a decision as to how
much to buy or sell during that time period; however, the level of the
store at the end of that time period may be different from that
planned if, during the course of the period, the store is called upon
to provide buffering services to deal with some unexpected problem or
\emph{shock}.  Such a shock might be the need to supply additional
energy during the time period~$t$ due to some unexpected failure---for
example that of a generator---or might simply be the difference
between forecast and actual renewable generation or demand.  We
suppose that the capacity of the store during the time period~$t$ is
$E_t$ units of energy.  (Usually $E_t$ will be constant over time, but
need not be, and there are some advantages---see in particular
Section~\ref{sec:determ-funct-a_t-2}---in allowing the time
dependence.)  Similarly we suppose that the total energy which may be
input or output during the time period~$t$ is subject to rate (i.e.\
power) constraints $P_{It}$ and $P_{Ot}$ respectively.  This
slotted-time model corresponds, for example, to real world energy
markets where energy is typically traded at half-hourly or hourly
intervals, with the actual delivery of that energy occurring in the
intervening continuous time periods.  Detailed descriptions of
the operation of the UK market can be found in \cite{NAO,ELEXON}.

For each~$t$ let $\X_t=\{x:-P_{Ot}\le x\le P_{It}\}$.  Both buying and
selling prices associated with any time period~$t$ may be represented
by a convex function~$C_t$ defined on $\X_t$ which is such that, for
positive $x$, $C_t(x)$ is the price of buying $x$ units of energy for
delivery during the time period~$t$, while, for negative $x$, $C_t(x)$
is the negative of the price for selling $-x$ units of energy during
that time period.  Thus, in either case, $C_t(x)$ is the cost of a
planned change of $x$ to the level of the store during the time
period~$t$, in the absence of any buffering services being required
during the course of that time period.  The convexity assumption
corresponds, for each time~$t$, to an increasing cost to the store of
buying each additional unit, a decreasing revenue obtained for selling
each additional unit, and every unit buying price being at least as
great as every unit selling price.  When, as is usually the case, the
store is not perfectly \emph{efficient} in the sense that only a
fraction $\eta\le1$ of the energy input in available for output, then
this may be captured in the cost function by reducing selling prices
by the factor~$\eta$; under the additional assumption that the cost
functions~$C_t$ are increasing it is easily verified that this
adjustment preserves the above convexity of the functions~$C_t$.  We
thus assume that the cost functions are so adjusted so as to capture
any such round-trip inefficiency.

\begin{remark}
  \label{rmk:1}
  A further form of possible inefficiency of a store is
  \emph{leakage}, whereby a fraction of the contents of the store is
  lost in each unit of time.  We do not explicitly model this here.
  However, only routine modifications are required to do so, and are
  entirely analogous to those described in~\cite{CFGZ}.
\end{remark}
\begin{remark}
  \label{rmk:2}
  Note also that, in the above model, it is possible to absorb the
  rate constraints into the cost functions---by setting the costs
  associated with $x\notin\X_t$ to be prohibitively high---and to
  preserve the convexity of these functions.  However, in general we
  prefer to avoid this approach here.
\end{remark}

Suppose that at the end of the time period $t-1$, or equivalently at
the start of the time period~$t$, the level of the store is $s_{t-1}$
(where we take $s_0$ to be the initial level of the store).  We assume
that one may then choose a \emph{planned} adjustment (contract to buy
or sell) $x_t\in\X_t$---and such that additionally
$s_{t-1}+x_t\in[0,E_t]$---to the level of the store during the time
period~$t$, the cost of this adjustment being $C_t(x_t)$.  Subsequent
to this, during the course of the time period~$t$, the the store may
subject to some \emph{shock} or random disturbance, corresponding
perhaps to the need to provide unexpected buffering services, which
may both disturb the final level of the store at the end of that time
period---and perhaps also at the end of subsequent time periods---and
have further associated costs, the latter being typically those of the
store not being able to provide the required services.

For each $t$, and for each possible level~$s_{t-1}$ of the store at
the end of the time period~$t-1$, define $V_{t-1}(s_{t-1})$ to be the
expected future cost of subsequently managing the store under an
optimal strategy (i.e.\ one under which this expected cost is
minimised), under the assumption that either no shocks have occurred
by the end of the time period~$t-1$ or that, given the level
$s_{t-1}$, such past shocks as have occurred by that time do not
influence the optimal future management of the store or its associated
costs.  Under these conditions, and for a planned adjustment~$x_t$ to
the level of the store during the time period~$t$ (at an immediate
cost~$C_t(x_t)$ as indicated above), in the absence of any shock
during the time period~$t$, the expected cost of optimally managing
the store thereafter is then $V_t(s_{t-1}+x_t)$.  We assume that the
expected \emph{additional} cost to the store, both immediate and
future, of dealing optimally with any shock which may occur during the
time period~$t$ is a function~$A_t(s_{t-1}+x_t)$ of the planned
level~$s_{t-1}+x_t$ of the store for the end of the time period~$t$.
We then have that
\begin{equation}
  \label{eq:1}
    V_{t-1}(s_{t-1}) = \min_{\substack{x_t\in \X_t\\s_{t-1}+x_t\in\cap[0,E_t]}}
  \left[
    C_t(x_t) + A_t(s_{t-1}+x_t) + V_t(s_{t-1}+x_t)
  \right],
\end{equation}
and that the optimal planned increment to the level of the store for
the time period~$t$ (given that an optimal policy is to be followed
thereafter) is given by $\hat{x}_t(s_{t-1})$ where this is defined to
be the value of $x_t\in\X_t$ which achieves the minimisation in the
recursion~\eqref{eq:1}.

We also define the terminal condition
\begin{equation}
  \label{eq:2}
  V_T(s_T) = 0
\end{equation}
for all possible levels $s_T$ of the store at the end of the time
period~$T$.

Note that $A_t(s_{t-1}+x_t)$ (which may be alternatively be
interpreted as the ``insurance'' cost associated with the planed level
of the store for the time period~$t$ as described in the Introduction)
may be understood via a coupling argument, in which the possibly
disturbed and subsequently optimally controlled process of store
levels---following any shock in the time period~$t$---is coupled to
the process which is undisturbed in that time period and subsequently
optimally controlled; $A_t(s_{t-1}+x_t)$ is then the expected
difference in the costs of operating the two processes until such time
(if ever) as they subsequently merge.  As we discuss further below,
this interpretation proves useful in finding workable approximations
to the functions~$A_t$.

\begin{remark}
  We make the assumption above that each function~$A_t$, representing
  the extra cost of dealing with a shock occurring during the time
  period~$t$, may be represented as a function of the planned level
  $s_{t-1}+x_t$ of the store for the end of that time period and,
  given this, does not further depend on the level~$s_{t-1}$ of the
  store at the beginning of that time period.  The accuracy of this
  assumption will vary according to the precise characteristics of the
  store, the way in which it interacts with its external environment
  in the event of shocks, and the various cost functions which form
  part of the model.  The assumption is likely to be at its most
  accurate when rate constraints do not play a major role in the
  management of the store, as the store may adjust to its target
  levels quickly.  Elsewhere, when the level of the store does not
  change too much during a single time period, the assumption may
  still be regarded as a reasonable approximation.  Its
  relaxation---for example by allowing $A_t$ to be a more general
  function of $s_{t-1}$ and $x_t$---simply complicates without
  essentially changing the analysis below.
\end{remark}

Our aim is to determine the optimal control of the store over the time
interval $[0,T]$.  Such a control will necessarily be stochastic.  In
principle some form of stochastic dynamic programming approach is
required.  However, particularly within a time heterogeneous
environment (in which there may be no form of regularity in either the
functions~$C_t$ or in the shock processes), such an approach would be
unlikely to be efficient, and might well prove computationally
intractable, on account of (a) the need, in such an approach, to
completely determine each of the functions~$V_t$ defined above, and
(b) the need to solve the problem over the entire time interval
$[t,T]$ in order to determine the optimal control at any time~$t$.

Our method of proceeding is therefore as follows.  We assume that the
functions $A_t$ are known, at least to within reasonable
approximations.  (We argue in Section~\ref{sec:determ-funct-a_t-2}
that in many cases the functions~$A_t$ may be determined either
exactly or to within a very good approximation; this follows from the
coupling characterisation of these functions introduced above.)  Given
the initial level~$s_0$ of the store we may then use the argument
leading to the recursion~\eqref{eq:1} and \eqref{eq:2} to determine
very efficiently a control which remains optimal up to the end of the
first time period in which a shock actually occurs.  Following such a
shock (and, if necessary, once its knock-on effects have cleared from
the system---again see the discussion of
Section~\ref{sec:determ-funct-a_t-2}), the current state (level) of
the store is reexamined and the optimal future control strategy
recalculated.  Iteration of this process leads to an efficient
(stochastic) dynamic control for the entire time interval $[0,T]$.  We
also show below that typically the optimal decision at (the start of)
any time~$t$ depends only on the functions $C_{t'}$ and $A_{t'}$ for
values of time~$t'$ extending only a little beyond the time~$t$.  The
approach outlined above is therefore generally also suitable for the
ongoing optimal management of the store over an indefinite period of
time.




\section{Characterisation of optimal solutions}
\label{sec:char-optim-solut}

In this section we establish some properties of the functions
$\hat{x}_t(\cdot)$ defined in the previous section and determining the
optimal control of the store.

One case which will be of particular interest is that where the store
is a price taker (i.e.\ the store is not so large as to impact
itself on market prices), so that, for each~$t$, the cost
function~$C_t$ is given by
\begin{equation}
  \label{eq:3}
  C_t(x) =
  \begin{cases}
    c^{(b)}_t x, & \quad\text{if $x\ge0$}\\
    c^{(s)}_t x, & \quad\text{if $x<0$}.
  \end{cases}
\end{equation}
and where the unit ``buying'' price~$c^{(b)}_t$ and the unit
``selling'' price~$c^{(s)}_t$ are such that $c^{(s)}_t\le c^{(b)}_t$.
(That, at any time~$t$, the reward obtained in the market resulting
from decreasing the level of the store by a single unit may be less
than the cost of increasing the level of the store by a single unit
may primarily reflect the fact that the store may be less than
perfectly efficient---see the discussion of
Section~\ref{sec:problem}.)

Proposition~\ref{prop:simple} below is a very simple result which
shows that in the case where buying and selling prices are equal
(typically corresponding to a perfectly efficient store), and provided
rate constraints are nonbinding, the optimal policy is a ``target''
one.  By this we mean that for each time period~$t$ there exists a
target level~$\hat{s}_t$: given that the level of the store at the end
of the immediately preceding time period is $s_{t-1}$ and that shocks
prior to that time have no further ongoing effects on the management
of the store, the optimal planned level~$s_{t-1}+x_t$ of the store to
be achieved during the time period~$t$ is set to some
value~$\hat{s}_t$, independently of $s_{t-1}$.

\begin{proposition}\label{prop:simple}
  Suppose that, for each $t$, we have $c^{(b)}_t=c^{(s)}_t=c_t$ say;
  define
  \begin{equation}
    \label{eq:4}
    \hat{s}_t = \argmin_{s\in[0,E_t]}[c_t s + A_t(s) + V_t(s)].
  \end{equation}
  Then, for each~$t$ and for each $s_{t-1}$, we have
  $\hat{x}_t(s_{t-1})=\hat{s}_t-s_{t-1}$ provided only that this
  quantity belongs to the set $\X_t$.
\end{proposition}

\begin{proof}
  The recursion~\eqref{eq:1} here becomes, for each $t$,
  \begin{equation}
    \label{eq:5}
    V_{t-1}(s_{t-1}) = \min_{\substack{x_t\in \X_t\\s_{t-1}+x_t\in\cap[0,E_t]}}
    \left[
      c_tx_t + A_t(s_{t-1}+x_t) + V_t(s_{t-1}+x_t)
    \right],
  \end{equation}
  and the above minimisation is achieved by $x_t$ such that
  $s_{t-1}+x_t=\hat{s}_t$, provided only that $x_t\in\X_t$.
\end{proof}

In order to deal with the possibility of rate constraint violation,
with the more general price-taker case where $c^{(s)}_t<c^{(b)}_t$,
and with the quite general case where the cost functions~$C_t$ are
merely required to be convex, we require the additional assumption of
convexity of the functions~$A_t$.  This latter condition, while not
automatic, is reasonably natural in many applications---see the
examples of Section~\ref{sec:example}.

\begin{proposition}\label{prop:aconvex}
  Suppose that, in addition to convexity of the functions~$C_t$, each
  of the functions~$A_t$ is convex.  Then, for each~$t$:
  \begin{compactenum}[(i)]
  \item the function~$V_{t-1}$ is convex;
  \item $\hat{x}_t(s_{t-1})$ is a decreasing function of $s_{t-1}$;
  \item $s_{t-1}+\hat{x}_t(s_{t-1})$ is an increasing function of
    $s_{t-1}$.
  \end{compactenum}
\end{proposition}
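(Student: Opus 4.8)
The plan is to prove all three parts by backward induction on $t$, starting from the terminal condition $V_T \equiv 0$, which is trivially convex. The three statements are naturally coupled: convexity of $V_{t-1}$ (part (i)) is what makes the optimisation in~\eqref{eq:1} a convex programme, and the monotonicity statements (ii) and (iii) are essentially the standard monotone-comparative-statics conclusions for the minimiser of such a programme. So the inductive step is where all the work lies: assuming $V_t$ is convex, I would first establish (i) for $V_{t-1}$, and then deduce (ii) and (iii) from the structure of the minimiser.

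For the inductive step on (i), fix the convex function $V_t$ and consider the objective $g_t(s_{t-1},y) = C_t(y-s_{t-1}) + A_t(y) + V_t(y)$, where I have changed variables to the planned end-level $y = s_{t-1}+x_t$. By the induction hypothesis $V_t$ is convex, $A_t$ is convex by assumption, and $C_t$ is convex, so $C_t(y-s_{t-1})$ is jointly convex in $(s_{t-1},y)$ because it is a convex function composed with the linear map $(s_{t-1},y)\mapsto y-s_{t-1}$. Hence $g_t$ is jointly convex in $(s_{t-1},y)$. The constraints $x_t\in\X_t$ and $s_{t-1}+x_t\in[0,E_t]$ translate into $y\in[0,E_t]$ together with $-P_{Ot}\le y-s_{t-1}\le P_{It}$, a set of linear inequalities defining a convex feasible region in $(s_{t-1},y)$. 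Then $V_{t-1}(s_{t-1}) = \min_y g_t(s_{t-1},y)$ is the partial minimisation of a jointly convex function over a convex set, which is a standard operation preserving convexity; this gives (i).

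For parts (ii) and (iii), I would work with the optimal planned level $\hat{y}_t(s_{t-1}) := s_{t-1}+\hat{x}_t(s_{t-1})$, noting that (iii) asserts $\hat{y}_t$ is increasing and (ii) asserts $s_{t-1}-\hat{y}_t(s_{t-1})$ is increasing, equivalently $\hat{x}_t = \hat{y}_t - s_{t-1}$ is decreasing. The key observation is that $g_t$ has \emph{decreasing differences} (submodularity) in $(s_{t-1},y)$: the only term coupling the two variables is $C_t(y-s_{t-1})$, and its cross-partial behaviour is governed by the convexity of $C_t$, which makes the marginal cost of increasing $y$ nonincreasing in $s_{t-1}$. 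By Topkis-type monotone comparative statics, this submodularity together with the lattice structure of the feasible set forces the minimiser $\hat{y}_t(s_{t-1})$ to be increasing in $s_{t-1}$, giving (iii). For (ii) I would show directly, using convexity, that $\hat{y}_t(s_{t-1})$ cannot increase faster than $s_{t-1}$ itself, so that the difference $\hat{x}_t = \hat{y}_t - s_{t-1}$ is decreasing; the cleanest route is to compare the first-order optimality conditions at two levels $s_{t-1}<s_{t-1}'$ and exploit convexity of $C_t$, $A_t$, $V_t$.

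I expect the main obstacle to be the careful handling of the constraints, rather than the convexity or monotonicity arguments per se. The feasible region for $y$ depends on $s_{t-1}$ through both the capacity constraint $y\in[0,E_t]$ (which does not depend on $s_{t-1}$) and the rate constraints $y-s_{t-1}\in[-P_{Ot},P_{It}]$ (which shift with $s_{t-1}$), and it is precisely at the boundary of the rate constraints that the target-policy intuition of Proposition~\ref{prop:simple} breaks down. I would therefore treat separately the regimes in which the minimiser is interior versus pinned at a rate constraint, and verify that the monotonicity conclusions (ii) and (iii) survive the transitions between these regimes; checking that $\hat{y}_t$ remains increasing as the active constraint switches is the delicate bookkeeping step. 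The underlying principle in every case is that both the constraint set and the objective respect the ordering, so the clamping induced by the constraints can only preserve, never reverse, the required monotonicity.
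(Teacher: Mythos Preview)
Your proposal is correct and follows essentially the same route as the paper. The paper works in the $(s_{t-1},x_t)$ coordinates with $U_{t-1}(s_{t-1},x_t)=C_t(x_t)+A_t(s_{t-1}+x_t)+V_t(s_{t-1}+x_t)$, proves~(i) by a direct convex-combination argument (i.e.\ it reproves the ``partial minimisation preserves convexity'' lemma you invoke), and for~(ii),~(iii) compares minimisers at two levels $s^{(1)}_{t-1}\le s^{(2)}_{t-1}$ using convexity of $C_t$ and of $A_t+V_t$ separately---which is exactly the Topkis/decreasing-differences argument you describe, stated informally and without the change of variables to $y$.
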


\begin{proof}
  It is helpful to define, for each $t=1,\dots,T$, the
  function~$U_{t-1}$ of each possible level~$s_{t-1}$ of the store at
  the end of the time period~$t-1$, and each possible planned
  increment~$x_t$ to the level of the store for the time period~$t$ by
  \begin{equation}
    \label{eq:6}
    U_{t-1}(s_{t-1},\,x_t) = C_t(x_t) + A_t(s_{t-1}+x_t) + V_t(s_{t-1}+x_t).
  \end{equation}
  The recursion~\eqref{eq:1} now becomes
  \begin{equation}
    \label{eq:7}
    V_{t-1}(s_{t-1})
    = \min_{\substack{x_t\in \X_t\\s_{t-1}+x_t\in\cap[0,E_t]}} U_{t-1}(s_{t-1},\,x_t).
  \end{equation}
  To show (i) we use backwards induction in time.  The function $V_T$
  is convex.  Suppose that, for any given $t\le T$, the function~$V_t$
  is convex; we show that the function~$V_{t-1}$ is convex.  For each
  of given values $s^{(i)}_{t-1}$, $i=1,\dots,n$ of $s_{t-1}$, let
  $x^{(i)}_t$ be the value of $x_t$ which achieves the minimisation in
  \eqref{eq:7}, and for any convex combination
  $\bar s_{t-1}=\sum_{i=1}^n\kappa_is^{(i)}_{t-1}$, where each
  $\kappa_i\ge0$ and where $\sum_{i=1}^n\kappa_i=1$, define also
  $\bar{x}_t=\sum_{i=1}^n\kappa_ix^{(i)}_t$.  Note that
  $\bar{x}_t\in\X_t$ and that $\bar{s}_{t-1}+\bar{x}_t\in[0,E_t]$.  Then,
  from~\eqref{eq:7},
  \begin{align*}
    V_{t-1}(\bar s_{t-1})
    & \le U_{t-1}(\bar s_{t-1},\,\bar x_t)\\
    &  \le \sum_{i=1}^n\kappa_i U_{t-1}(s^{(i)}_{t-1},\,x^{(i)}_t)\\
    & = \sum_{i=1}^n\kappa_iV_{t-1}(s^{(i)}_{t-1}),
  \end{align*}
  where the second line in the above display follows from the
  definition~\eqref{eq:6} of the function~$U_{t-1}$ and the convexity
  of the functions~$C_t$, $A_t$ and $V_t$ (the latter by the inductive
  hypothesis).  Thus $V_{t-1}$ is convex as required.

  To show (ii) and (iii), given values
  $s^{(1)}_{t-1}\le s^{(2)}_{t-1}$ of $s_{t-1}$, again let
  $x^{(1)}_t$, $x^{(2)}_t$ be the respective values of $x_t$ which
  achieves the minimisation in \eqref{eq:7}.  Since for the function
  $U_{t-1}(s^{(1)}_{t-1},\,\cdot)$ is minimised in $\X_t\cap E_t$ at
  $x^{(1)}_t$, it follows straightforwardly, from the
  definition~\eqref{eq:6} of the function~$U_{t-1}$ and the convexity
  of the function~$C_t$ and that of the function $A_t+V_t$, that,
  since $s^{(1)}_{t-1}\le s^{(2)}_{t-1}$, the minimisation of the
  function $U_{t-1}(s^{(2)}_{t-1},\,\cdot)$ is achieved (or, in the
  case of nonuniqueness, may be achieved) at $x^{(2)}_t\le x^{(1)}_t$.
  Thus the result~(ii) follows.  Similarly, it is again
  straightforward from the convexity of the function $C_t$ and that of
  the function $A_t+V_t$ and since $s^{(1)}_{t-1}\le s^{(2)}_{t-1}$,
  that $x^{(2)}_t$ is (or, in the case of nonuniqueness, may be taken
  to be) such that
  $s^{(2)}_{t-1}+x^{(2)}_t\ge s^{(1)}_{t-1}+x^{(1)}_t$.  The
  result~(iii) thus similarly follows.
\end{proof}

\begin{remark}
  \label{rmk:3}
  Note that the rate constraints $x_t\in\X_t$ for all $t$ cause no
  difficulties for the above proof---a result which may alternatively
  be seen by absorbing these constraints into the cost functions $C_t$
  as described in Remark~\ref{rmk:2}.
\end{remark}

We now return to the price-taker case, in which the cost functions
are as defined by~\eqref{eq:3}, and which corresponds to a store which
is not sufficiently large as to have market impact.  Here we may prove
a strengthened version of Proposition~\ref{prop:aconvex}.  For each
$t$, given that the function~$A_t$ is convex, define
\begin{equation}
  \label{eq:8}
  s^{(b)}_t = \argmin_{s\in[0,E_t]}[c^{(b)}_t s + A_t(s) + V_t(s)]
\end{equation}
and similarly define
\begin{equation}
\label{eq:9}
  s^{(s)}_t = \argmin_{s\in[0,E_t]}[c^{(s)}_t s + A_t(s) + V_t(s)].
\end{equation}
Note that the above convexity assumption and the condition that, for
each~$t$, we have $c^{(s)}_t\le c^{(b)}_t$ imply that $s^{(b)}_t\le
s^{(s)}_t$.
We now have the following result.
\begin{proposition} 
  \label{prop:interval}
  Suppose that the cost functions~$C_t$ are as given by \eqref{eq:3}
  and that the functions~$A_t$ are convex.  Then the optimal policy is
  given by: for each~$t$ and given $s_{t-1}$, take
  \begin{equation}
    \label{eq:10}
    x_t =
    \begin{cases}
      \min(s^{(b)}_t-s_{t-1},\,P_{It}) & \quad\text{if $s_{t-1}<s^{(b)}_t$,}\\
      0  & \quad\text{if $s^{(b)}_t\le s_{t-1}\le s^{(s)}_t$,}\\
      \max(s^{(s)}_t-s_{t-1},\,-P_{Ot}) & \quad\text{if $s_{t-1}>s^{(s)}_t$.}
    \end{cases}
  \end{equation}
\end{proposition}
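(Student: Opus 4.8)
The plan is to rewrite the minimisation defining $\hat{x}_t(s_{t-1})$ in terms of the planned level $s=s_{t-1}+x_t$ and then exploit convexity separately on the ``buying'' and ``selling'' branches determined by the kink of $C_t$ at the origin. Set $g_t(s)=A_t(s)+V_t(s)$, which is convex since $A_t$ is convex by hypothesis and $V_t$ is convex by Proposition~\ref{prop:aconvex}(i). Because $C_t(x)=c^{(b)}_t x$ for $x\ge0$ and $C_t(x)=c^{(s)}_t x$ for $x<0$, the objective $U_{t-1}(s_{t-1},x_t)=C_t(x_t)+g_t(s_{t-1}+x_t)$ from \eqref{eq:6} splits: over $x_t\ge0$ it equals $c^{(b)}_t(s-s_{t-1})+g_t(s)$, and over $x_t\le0$ it equals $c^{(s)}_t(s-s_{t-1})+g_t(s)$. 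Thus, up to the additive constants $-c^{(b)}_t s_{t-1}$ and $-c^{(s)}_t s_{t-1}$, minimising on the buying branch amounts to minimising the convex function $h^{(b)}_t(s):=c^{(b)}_t s+g_t(s)$, whose minimiser over $[0,E_t]$ is exactly the $s^{(b)}_t$ of \eqref{eq:8}, while minimising on the selling branch amounts to minimising $h^{(s)}_t(s):=c^{(s)}_t s+g_t(s)$, with minimiser $s^{(s)}_t$ from \eqref{eq:9}.

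First I would treat each branch on its own. On the buying branch the feasible planned levels form the interval $[s_{t-1},\,\min(E_t,\,s_{t-1}+P_{It})]$; since $h^{(b)}_t$ is convex, its constrained minimiser is the projection of $s^{(b)}_t$ onto this interval, giving $x_t=\min(s^{(b)}_t-s_{t-1},\,P_{It})$ when $s^{(b)}_t\ge s_{t-1}$ and $x_t=0$ otherwise. Symmetrically, on the selling branch the feasible set is $[\max(0,\,s_{t-1}-P_{Ot}),\,s_{t-1}]$ and the constrained minimiser of $h^{(s)}_t$ yields $x_t=\max(s^{(s)}_t-s_{t-1},\,-P_{Ot})$ when $s^{(s)}_t\le s_{t-1}$ and $x_t=0$ otherwise. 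The two branches agree at $x_t=0$, where the common value is $g_t(s_{t-1})$, so the genuine optimum is obtained by comparing the two branch minima and taking whichever improves on $g_t(s_{t-1})$.

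The comparison is then settled by the ordering $s^{(b)}_t\le s^{(s)}_t$ noted after \eqref{eq:9} (itself a consequence of the convexity of $g_t$ together with $c^{(s)}_t\le c^{(b)}_t$). I would split into the three cases of \eqref{eq:10}. If $s_{t-1}<s^{(b)}_t$ then also $s_{t-1}<s^{(s)}_t$, so the selling branch returns $x_t=0$ while the buying branch strictly improves, giving the first line. If $s_{t-1}>s^{(s)}_t$ then also $s_{t-1}>s^{(b)}_t$, the buying branch returns $x_t=0$, and the selling branch gives the third line. If $s^{(b)}_t\le s_{t-1}\le s^{(s)}_t$ then $s^{(b)}_t\le s_{t-1}$ forces $x_t=0$ on the buying branch and $s^{(s)}_t\ge s_{t-1}$ forces $x_t=0$ on the selling branch, so no action is optimal.

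I expect the only delicate point to be the handling of the kink: one must check that when $c^{(s)}_t<c^{(b)}_t$ the two separate branch-optimisations genuinely patch together into a single no-action interval $[s^{(b)}_t,\,s^{(s)}_t]$, rather than overlapping or leaving a gap. This is precisely where $s^{(b)}_t\le s^{(s)}_t$ is used, and it can be seen cleanly in subgradient terms: the kink of $C_t$ at $0$ contributes the interval $[c^{(s)}_t,\,c^{(b)}_t]$ to the subdifferential, so $0$ is a subgradient of $U_{t-1}(s_{t-1},\cdot)$ at $x_t=0$ exactly when $\partial g_t(s_{t-1})\cap[-c^{(b)}_t,\,-c^{(s)}_t]\neq\emptyset$, which by monotonicity of $\partial g_t$ is the interval $s^{(b)}_t\le s_{t-1}\le s^{(s)}_t$. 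Everything else is the routine projection of a convex function's minimiser onto a feasible interval, with the rate constraints $P_{It}$ and $P_{Ot}$ entering only through that projection.
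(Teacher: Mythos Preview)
Your proof is correct and follows essentially the same approach as the paper's: both invoke the convexity of $V_t$ from Proposition~\ref{prop:aconvex}(i), identify the unconstrained minimisers of the objective in the three regimes via the definitions \eqref{eq:8}--\eqref{eq:9}, and then impose the rate constraints. The paper's argument is simply terser---it asserts in one sentence that convexity of $C_t$, $A_t$ and $V_t$ yields the unconstrained minimisers $s^{(b)}_t-s_{t-1}$, $0$, and $s^{(s)}_t-s_{t-1}$ in the three cases, and then appeals to the recursion~\eqref{eq:1} for the rate-constraint truncation---whereas you spell out the branch-by-branch projection and the subgradient handling of the kink explicitly.
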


\begin{proof}
  For each $t$, it follows from the convexity of the functions~$C_t$,
  $A_t$ and $V_t$ (the latter by the first part of
  Proposition~\ref{prop:aconvex}) that, for $s_{t-1}<s^{(b)}_t$ the
  function $C_t(x_t)+A_t(s_{t-1}+x_t)+V_t(s_{t-1}+x_t)$ is minimised
  by $x_t=s^{(b)}_t-s_{t-1}$, for $s^{(b)}_t\le s_{t-1}\le s^{(s)}_t$
  it is minimised by $x_t=0$, while for $s_{t-1}>s^{(s)}_t$, it is
  minimised by $x_t=s^{(s)}_t-s_{t-1}$.  The required result now
  follows from the recursion~\eqref{eq:1}.
\end{proof}

Thus in general in the price-taker case there exists, for each time
period~$t$, a ``target interval'' $[s^{(b)}_t,s^{(s)}_t]$ such that,
if the level of the store at the end of the previous time period is
$s_{t-1}$ (and again given that the shocks prior to this time have no
ongoing effects on the optimal management of the store), the optimal
policy is to chose $x_t$ so that $s_{t-1}+x_t$ is the nearest point
(in absolute distance) to $s_{t-1}$ lying within, or as close as
possible to, the above interval.  In the case where
$c^{(b)}_t=c^{(s)}_t=c_t$, the above interval shrinks to the single
point~$\hat{s}_t$ defined by~\eqref{eq:4}.

These results shed some light on earlier, more applied, papers of
Bejan et al~\cite{BGK} and Gast et al~ \cite{GTL}, in which the
uncertainties in the operation of a energy store result from errors in
wind power forecasts.  The model considered in those papers is close
to that of the present paper, as we now describe.  The costs of
operating the store result (a) from round-trip inefficiency, which in
the formulation of the present paper would be captured by the cost
functions $C_t$ as defined by~\eqref{eq:3} with $C_t$ the same for
all~$t$, and (b) from buffering events, i.e.\ from failures to meet
demand through insufficient energy available to be supplied from the
store when it is needed, and from energy losses through store
overflows.  In the formulation of the present paper these costs would
be captured by the functions~$A_t$.  In contrast to the present paper
decisions affecting the level of the store (the amount of conventional
generation to schedule for a particular time) are made $n$ time
steps---rather than a single time step---in advance.  The shocks to
the system result from the differences between the available wind
power as forecasted $n$ steps ahead of real time (when conventional
generation is scheduled) and the wind power actually obtained.
Although the model of the above papers is therefore not exactly the
same as that of the present paper, the underlying arguments leading to
Propositions~\ref{prop:simple}--\ref{prop:interval} continue to apply,
at least to a good approximation.  In particular sample path arguments
suggest that the reduction of round-trip efficiency slows the rate at
which the store-level trajectories---started from different initial
levels but with the same stochastic description of future shock
processes---converge over subsequent time.


Bejan et al~\cite{BGK} consider only the case where the round-trip
efficiency is $1$.  They study the efficiency of policies---analogous
to those suggested by Proposition~\ref{prop:simple}---whereby, for
each~$t$, the generation scheduled for time~$t$ at the earlier time
$t-n$ is such as would, given perfect forecasting, achieve a given
target level $\hat{s}_t$ of the store at time~$t$; this target level
is independent of the level $s_{t-n}$ of the store at the time $t-n$
and of earlier scheduling decisions.  However, Bejan et al~\cite{BGK}
further take $\hat{s}_t$ to be independent of $t$, something which may
not be optimal given the likely nonstationarity of the process of
forecast errors.


Gast et al~\cite{GTL} subsequently study the same time series of
available wind power, but allowed for round-trip efficiencies which
are less than~$1$.  They find (as might be expected here) that simple
``target'' policies such as that described above do not work well
under these circumstances, and compare the behaviour of a variety of
time-homogeneous policies.

\section{Determination of the functions~$A_t$}
\label{sec:determ-funct-a_t-2}

We described in Section~\ref{sec:problem} how, given a knowledge of
the functions~$A_t$, the optimal control of the store could be
determined.  In
Sections~\ref{sec:simpl-optim-probl}--\ref{sec:algorithm} we develop
such an approach, which is based on strong Lagrangian theory and which
is very much more efficient, in senses explained there, than the
application of standard dynamic programming or nonlinear optimisation
techniques.  In this section we consider conditions under which the
functions~$A_t$ may be thus known, either exactly or to good
approximations.

Suppose that, as in Section~\ref{sec:problem}, at the end of the time
period~$t-1$ the level of the store is $s_{t-1}$ and that, given
$s_{t-1}$, any shocks prior to that time have no further effect on the
optimal management of the store.  Suppose further that an increase of
$x_t$ (positive or negative) is planned for the time period~$t$ (at a
cost of $C_t(x_t)$).  Recall that $A_t(s_{t-1}+x_t)$ is then defined
to be the expected additional cost to the store of dealing optimally
with any shock which may occur during the time period~$t$, and may be
conveniently characterised in terms of the coupling defined in that
Section~\ref{sec:problem}.  Now define also $\bA_t(s_{t-1}+x_t)$ to be
the expected additional cost to the store of dealing with any shock
which may occur during the time period~$t$ and \emph{immediately}
returning the level of the store to its planned level $s_{t-1}+x_t$ at
the end of the time period~$t$.  As in the case of the function~$A_t$,
we assume that each function $\bA_t$ depends on $s_{t-1}$ and $x_t$
through their sum $s_{t-1}+x_t$---the extent to which this
approximation is reasonable being as discussed for the
functions~$A_t$.  Given the costs of dealing with any shocks, and the
known costs of making any immediate subsequent adjustments to the
level of the store, the functions $\bA_t$ are readily determinable,
and in particular do not depend on how the store is controlled outside
the time period~$t$.

Note that, in the case of linear cost functions (i.e.\ $C_t(x)=c_tx$
for all~$t$) and when shocks do not have effects which persist beyond
the end of the time period in which they occur, the argument of
Proposition~\ref{prop:simple} implies immediately that $A_t=\bA_t$ for
all~$t$: the linearity of $C_t$ implies that, at the end of the time
period~$t-1$ and when the level of the store is then $s_{t-1}$, if
$s_{t-1}+x_t$ is the optimal planned level of the store for the end of
the time period~$t$, then it remains the optimal level of the store
for the end of that time period following any shock which occurs
during it.

More generally the functions $\bA_t$ provide reasonable approximations
to the functions~$A_t$ to the extent to which it is reasonable,
following any shock with which the store is required to deal, to
return immediately the level of the store to that which would have
obtained in the absence of the shock.  In particular, when shocks are
relatively rare but are potentially expensive (as might be the case
when the store is required to pay the costs of failing to have
sufficient energy to deal with an emergency), then the major
contribution to both the functions~$A_t$ and $\bA_t$ will be this
cost, regardless of precisely how the level of the store is adjusted
in the immediate aftermath of the shock. 

If necessary, better approximations to the functions $A_t$ may be
obtained by allowing longer periods of time in which to optimally
couple the trajectory of the store level, following a shock, to that
which would have obtained in its absence.  In applications one would
wish to experiment a little here.

In applications there is also a need, when the costs of a shock arise
from a failure to have insufficient energy in the store to deal with
it, to identify what these costs are.  There are various possible
candidates.  Two simple such---natural in the context of risk metrics
for power systems, where they correspond respectively to \emph{loss of
  load} and \emph{energy unserved} (see, for example,
\cite{Billinton})---are:
\begin{compactenum}[(i)]
\item for each $t>0$, the cost of a shock occurring during the time
  period~$t$ is simply some constant $a_t>0$ if there is insufficient
  energy within the store to meet it, and is otherwise~$0$.
\item for each $t>0$, the cost of a shock occurring during the time
  period~$t$ is proportional to the shortfall in the energy necessary
  to meet that shock.
\end{compactenum}
Given the planned level $s_{t-1}+x_t$ of the store to be achieved
during any time period~$t$, the total additional cost of dealing with
any shock occurring during that time period (as defined for example in
terms of the coupling introduced in Section~\ref{sec:problem}) is a
random variable which is a function of the size of the shock.  The
distribution of this random variable, and so also its expectation
$A_t(s_{t-1}+x_t)$ may need to be determined by observation.

Note finally that the effects of shocks may persist over several time
periods (as, for example, when the store is required to provide
ongoing support for the sudden loss of major piece of equipment such
as a generator), so that each of the functions~$A_t$---which will in
general be decreasing---need not be flat for values of its argument in
excess of the output rate constraint~$P_{Ot}$.  In particular a
reasonable way of dealing with a shock whose effects do persist over
several time periods may simply be to reserve notionally sufficient
energy in the store to deal with it; then, following such a shock, the
level of the store will temporally become the excess over that reserve
and the capacity of the store will correspondingly be temporally
reduced.  This causes no problems for the present theory, and is a
reason for allowing a possible time dependence (which may be dynamic)
for the capacity of the store.

We consider some plausible functional forms of the functions~$A_t$ in
Section~\ref{sec:example}.

\section{The optimal control problem}
\label{sec:simpl-optim-probl}

We now assume that the functions $A_t$ defined in
Section~\ref{sec:problem} are known, at least to a sufficiently good
approximation---see the discussion of the previous section.

Define (the random variable) $\hat{s}=(\hat{s}_0,\dots,\hat{s}_T)$
(with $\hat{s}_0=s^*_0$) to be the levels of the store at the end of
the successive time periods $t=0,\dots,T$ under the (stochastic)
optimal control as defined in Section~\ref{sec:problem}.
Recall also from Section~\ref{sec:problem} that, for each~$t$ and each
level $s_{t-1}$ of the store at the end of the time period~$t-1$, the
quantity~$\hat{x}_t(s_{t-1})$ is the value of $x_t\in\X_t$ which
achieves the minimisation in the recursion~\eqref{eq:1}.

For any vector $s=(s_0,\dots,s_T)$ and for each $t=1,\dots,T$, define
\begin{equation}
  \label{eq:13}
  x_t(s) = s_t - s_{t-1}.
\end{equation}
Define also the following (deterministic) optimisation problem:
\begin{compactitem}
\item[$\Ps$:]
  choose $s=(s_0,\dots,s_T)$ with $s_0=s^*_0$ so as to minimise
  \begin{equation}
    \label{eq:14}
    \sum_{t=1}^T [C_t(x_t(s)) + A_t(s_t)]
  \end{equation}
  subject to the capacity constraints
  \begin{gather}
    \label{eq:15}
    0 \le s_t\le E_t,
    \quad 1 \le t \le T,
  \end{gather}
  and the rate constraints
  \begin{equation}
    \label{eq:16}
    x_t(s) \in \X_t,
    \qquad 1 \le t \le T.
  \end{equation}
\end{compactitem}
Let $s^*=(s^*_0,\dots,s^*_T)$ denote the solution to the above
problem~$\Ps$.  It follows from direct iteration of the
recursion~\eqref{eq:1}, using also~\eqref{eq:2}, that $x_1(s^*)$
achieves the minimisation in~\eqref{eq:1} for $t=1$ and when
$s_0=s^*_0$, i.e.\ that
$\hat{x}_1(s^*_0)=\hat{x}_1(\hat{s}_0)=x_1(s^*)$.  Thus,
from~\eqref{eq:13}, provided no shock occurs during the time
period~$1$ so that $\hat{s}_1=\hat{s}_0+\hat{x}_1(\hat{s}_0)$, we have
also that $\hat{s}_1=s^*_1$.  More generally, let the random
variable~$T'$ index the first time period during which a shock does
occur.  Then repeated application of the above argument gives
immediately the following result.

\begin{proposition}
  For all $t<T'$, we have $\hat{s}_t=s^*_t$.
\end{proposition}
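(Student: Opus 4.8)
The plan is to prove the statement by induction on~$t$, exploiting the fact that the deterministic problem~$\Ps$ is nothing other than the recursion~\eqref{eq:1} unrolled over the whole horizon. The single fact I need beyond the base case is a \emph{matching identity}: for each~$t$, the increment $x_t(s^*)$ prescribed by the optimal solution of~$\Ps$ should coincide with the myopic optimizer $\hat{x}_t(s^*_{t-1})$ of the one-step problem~\eqref{eq:1} evaluated at the state $s^*_{t-1}$ actually reached along the optimal deterministic trajectory.

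First I would establish this matching. Iterating~\eqref{eq:1} backwards from the terminal condition~\eqref{eq:2} expresses $V_0(s^*_0)$ as the minimum of $\sum_{t=1}^T[C_t(x_t)+A_t(s_{t-1}+x_t)]$ over admissible increment sequences with $s_{t-1}+x_t=s_t$; since $A_t(s_{t-1}+x_t)=A_t(s_t)$, this is exactly the objective~\eqref{eq:14} of~$\Ps$ together with the constraints~\eqref{eq:15}--\eqref{eq:16}. Hence $s^*$ solves~$\Ps$ precisely when it realizes this nested minimization, and by the principle of optimality the tail $(s^*_{t-1},\dots,s^*_T)$ is optimal for the subproblem started from $s^*_{t-1}$, whose value is by definition $V_{t-1}(s^*_{t-1})$. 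Consequently $x_t(s^*)$ achieves the minimization in~\eqref{eq:1} at $s_{t-1}=s^*_{t-1}$, i.e.\ $\hat{x}_t(s^*_{t-1})=x_t(s^*)$ for every~$t$.

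With the matching in hand the induction is immediate. The base case $\hat{s}_0=s^*_0$ holds by definition. Assuming $\hat{s}_{t-1}=s^*_{t-1}$ for some $t<T'$, I observe that period~$t$ precedes the first shock, so the realized level at the end of period~$t$ equals the planned level, $\hat{s}_t=\hat{s}_{t-1}+\hat{x}_t(\hat{s}_{t-1})$. Substituting the inductive hypothesis and then the matching identity gives $\hat{s}_t=s^*_{t-1}+\hat{x}_t(s^*_{t-1})=s^*_{t-1}+x_t(s^*)=s^*_t$ by~\eqref{eq:13}, completing the step.

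The one point that warrants care, and which I regard as the main obstacle, is the matching identity when the minimizer in~\eqref{eq:1} fails to be unique: a priori the globally optimal increment $x_t(s^*)$ and the separately defined myopic choice $\hat{x}_t(s^*_{t-1})$ could be distinct minimizers. This is resolved by noting that whenever the minimizer is unique the two necessarily coincide, and that in the general (e.g.\ convex but not strictly convex) case one may simply fix the tie-breaking convention defining $\hat{x}_t$ so that it returns $x_t(s^*)$ at the states arising along the optimal trajectory; the value functions $V_{t-1}$, and hence the conclusion $\hat{s}_t=s^*_t$, are insensitive to this choice.
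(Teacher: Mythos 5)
Your proof is correct and takes essentially the same route as the paper: the paper likewise unrolls the recursion~\eqref{eq:1} with the terminal condition~\eqref{eq:2} to identify the dynamic program with~$\Ps$, deduces $\hat{x}_1(s^*_0)=x_1(s^*)$, and then notes that ``repeated application of the above argument'' gives the result---exactly the induction, via the principle of optimality, that you carry out explicitly. Your closing remark on tie-breaking when the minimiser in~\eqref{eq:1} is non-unique addresses a point the paper leaves implicit (its definitions of $\hat{x}_t$ and $s^*$ tacitly assume unique, or consistently selected, minimisers), and is a sensible added precaution rather than a different approach.
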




The solution to the problem $\Ps$ therefore defines the optimal
control of the store up to the end of the time period~$T'$ defined
above.  At that time, and the end of each subsequent time period
during which there occurs a shock, it is of course necessary to
reformulate the problem~$\Ps$, starting at the end of the time
period~$T'$ (or as soon any shock occurring during that time period
has been fully dealt with), instead of at time~$0$, and replacing the
initial level $s^*_0=\hat{s}_0$ by the perturbed level $\hat{s}_{T'}$
of the store at that time.  Thus the stochastic optimal control
problem may be solved dynamically by the solution of the
problem~$\mathbf{P}$ at time~$0$, and the further solution of (a
reformulated version) of this problem at the end of each subsequent
time period in which a shock occurs.  The solution of the problem,
which we now consider, is very much simpler than that of the
corresponding stochastic dynamic programming approach.


\section{Lagrangian theory and characterisation of solution}
\label{sec:solution}

We showed in the previous section that, to the extent that the
functions~$A_t$ are known, an optimal control for the store may be
developed via the solution of the optimisation problem~$\Ps$ defined
there.  In Section~\ref{sec:determ-funct-a_t-2} we discussed how to
make what are in many cases good and readily determinable
approximations for the functions~$A_t$.

We again assume convexity of the functions~$A_t$ (see
Section~\ref{sec:char-optim-solut}), in addition to that of the
functions~$C_t$.  We develop the strong Lagrangian
theory~\cite{Boyd,Whi} associated with the problem~$\Ps$.  This leads
to both an efficient algorithm for its solution, and to the
identification of the Lagrange multipliers necessary for the proper
dimensioning of the store.  In particular Theorem~\ref{thm:exist}
establishes the existence of a pair of vectors $(s^*,\,\lambda^*)$
such that $s^*$ solves the problem~$\Ps$ and $\lambda^*$ is a function
of the associated Lagrange multipliers corresponding to the capacity
constraints (see below); the theorem further gives conditions
necessarily satisfied by the pair $(s^*,\,\lambda^*)$.

We now introduce the more general problem~$\mathbf{P}(a,\,b)$ in which
$s_0$ is kept fixed at the value $s^*_0$ of interest above, but in
which $s_1,\dots,s_T$ are allowed to vary between quite general upper
and lower bounds:
\begin{compactitem}
\item[$\mathbf{P}(a,\,b)$:]
  choose $s=(s_0,\dots,s_T)$, with $s_0=s^*_0$ so as to minimise
  \begin{equation}
    \label{eq:17}
    \sum_{t=1}^T [C_t(x_t(s)) + A_t(s_t)]
  \end{equation}
  subject to the capacity constraints
  \begin{equation}
    \label{eq:18}
    a_t \le s_t\le b_t,
    \quad 1 \le t \le T,
  \end{equation}
  and the rate constraints
  \begin{equation}
    \label{eq:19}
    x_t(s) \in \X_t,
    \qquad 1 \le t \le T.
  \end{equation}
\end{compactitem}
Here $a=(a_1,\dots,a_T)$ and $b=(b_1,\dots,b_T)$ are such that
$a_t\le b_t$ for all $t$.  Let also $a^*$ and $b^*$ be the values of
$a$ and $b$ corresponding to our particular problem~$\Ps$ of interest,
i.e.\
\begin{equation}
  \label{eq:20}
  a^*_t = 0, \quad b^*_t = E_t, \quad 1\le t\le T.
\end{equation}


Note that the convexity of the functions $C_t$ and $A_t$ guarantees
their continuity, and, since for each $a$, $b$ as above the space of
allowed values of $s$ is compact, a solution $s^*(a,\,b)$ to the
problem~$\mathbf{P}(a,\,b)$ always exists.  Let $V(a,\,b)$ be the
corresponding minimised value of the objective function, i.e.\
\begin{displaymath}
  V(a,\,b)=\sum_{t=1}^T[C_t(x_t(s^*(a,\,b)))+A_t(s^*(a,\,b))].
\end{displaymath}
Observe also that the function $V(a,\,b)$ is itself convex in $a$ and
$b$.  To see this, consider any convex combination
$(\bar a,\bar b)=(\kappa a_1+(1-\kappa)a_2,\kappa b_1+(1-\kappa)b_2)$
of any two values $(a_1,b_1)$, $(a_2,b_2)$ of the pair $(a,b)$, where
$0\le\kappa\le1$; since the constraints~\eqref{eq:18} and \eqref{eq:19}
are linear, it follows that the vector
$\bar s=\kappa s^*(a_1,b_1)+(1-\kappa)s^*(a_2,b_2)$ is feasible for
the problem~$\mathbf{P}(\bar a,\,\bar b)$; hence, from the convexity
of the functions~$C_t$ and $A_t$,
\begin{align*}
  V(\bar a,\bar b)
  & \le \sum_{t=1}^T[C_t(x_t(\bar s)) + A_t(\bar s_t)]\\
  & = \sum_{t=1}^T[C_t(\kappa x_t(s^*(a_1,b_1)) + (1-\kappa)x_t(s^*(a_2,b_2)))
  + A_t(\kappa s^*_t(a_1,b_1) + (1-\kappa)s^*_t(a_2,b_2))]\\
  & \le \kappa\sum_{t=1}^T[C_t(x_t(s^*(a_1,b_1)))+A_t(s^*_t(a_1,b_1))]
  + (1-\kappa)\sum_{t=1}^T[C_t(x_t(s^*(a_2,b_2)))+A_t(s^*_t(a_2,b_2))]\\
  & = \kappa V(a_1,b_1) + (1-\kappa)V(a_2,b_2). 
\end{align*}

We now have the following result, which encapsulates the relevant
strong Lagrangian theory.

\begin{theorem}
  \label{thm:exist}
  Let $s^*$ denote the solution to the
  problem~$\Ps$.  Then there exists a vector
  $\lambda^*=(\lambda^*_1,\dots,\lambda^*_T)$ such that
  \begin{compactenum}[(i)]
  \item \label{con1} for all vectors~$s$ such that $s_0=s^*_0$ and
    $x_t(s)\in\X_t$ for all $t$ ($s$ is not otherwise constrained),
    \begin{equation}
      \label{eq:21}
      \sum_{t=1}^T \left[C_t(x_t(s)) + A_t(s_t) - \lambda^*_t s_t \right]
      \ge
      \sum_{t=1}^T \left[C_t(x_t(s^*)) + A_t(s^*_t)-  \lambda^*_t s^*_t \right].
    \end{equation}
  \item \label{con2} the pair $(s^*,\lambda^*)$ satisfies the
    complementary slackness conditions, for $1\le t\le T$,
    \begin{equation}
      \label{eq:22}
      \begin{cases}
        \lambda^*_t = 0 & \quad\text{if $0 < s^*_t < E_t$,}\\
        \lambda^*_t \ge 0 & \quad\text{if $s^*_t = 0$,}\\
        \lambda^*_t \le 0 & \quad\text{if $s^*_t = E_t$.}
      \end{cases}
    \end{equation}
  \end{compactenum}

  Conversely, suppose that there exists a pair of vectors
  $(s^*,\,\lambda^*)$, with $s_0=s^*_0$, satisfying the
  conditions~\eqref{con1} and \eqref{con2} and such that $s^*$ is
  additionally feasible for the problem~$\Ps$.  Then $s^*$
  solves the problem~$\Ps$.
\end{theorem}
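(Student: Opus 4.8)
The plan is to obtain the multiplier vector $\lambda^*$ as a supporting hyperplane to the convex value function $V(a,b)$ at the point $(a^*,b^*)=(0,E)$ corresponding to $\Ps$, to read off conditions \eqref{con1} and \eqref{con2} from the sign and complementary-slackness properties of that hyperplane, and then to dispose of the converse by a routine termwise argument. Throughout I would use the convexity of $V(a,b)$ already established above, together with the existence of solutions $s^*(a,b)$.

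First I would record two monotonicity facts, each immediate from the observation that shrinking or enlarging the feasible region of $\mathbf{P}(a,b)$ can only raise or lower the minimised objective: $V(a,b)$ is nondecreasing in each $a_t$ and nonincreasing in each $b_t$. Since $V$ is convex and finite near $(a^*,b^*)$, it admits a subgradient there; writing this as a pair $(\alpha^*,\beta^*)$ satisfying $V(a,b)\ge V(a^*,b^*)+\sum_t\alpha^*_t(a_t-a^*_t)+\sum_t\beta^*_t(b_t-b^*_t)$, the monotonicity forces $\alpha^*_t\ge0$ and $\beta^*_t\le0$ for every $t$. I would then define $\lambda^*_t=\alpha^*_t+\beta^*_t$.

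To obtain \eqref{con1}, I note that for any $s$ with $s_0=s^*_0$ and $x_t(s)\in\X_t$ the choice $a_t=b_t=s_t$ pins every level, so $\mathbf{P}(s,s)$ has $s$ as its unique feasible level-vector and hence $V(s,s)=\sum_t[C_t(x_t(s))+A_t(s_t)]$. Substituting $(a,b)=(s,s)$ into the subgradient inequality, using $V(a^*,b^*)=\sum_t[C_t(x_t(s^*))+A_t(s^*_t)]$, and rearranging yields \eqref{eq:21} precisely once one knows $\sum_t\lambda^*_t s^*_t=\sum_t\beta^*_t E_t$. This identity, and simultaneously \eqref{con2}, follow from complementary slackness, which I would prove by a local-insensitivity argument: if $s^*_t>0$ then $s^*$ remains feasible, and therefore optimal, for $\mathbf{P}(a,b^*)$ as $a_t$ ranges over $[0,s^*_t]$, so $V$ is constant in $a_t$ on a right-neighbourhood of $0$ and the subgradient bound forces $\alpha^*_t=0$; symmetrically $s^*_t<E_t$ forces $\beta^*_t=0$. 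Thus $\alpha^*_t s^*_t=0$ and $\beta^*_t(s^*_t-E_t)=0$, which delivers the identity needed for \eqref{con1} and, reading off the surviving term, the sign pattern \eqref{con2} for $\lambda^*_t=\alpha^*_t+\beta^*_t$. The converse is then direct: for any $s$ feasible for $\Ps$, the bounds $0\le s_t\le E_t$ together with \eqref{con2} make each $\lambda^*_t(s_t-s^*_t)\ge0$, so summing \eqref{eq:21} shows $\sum_t[C_t(x_t(s))+A_t(s_t)]\ge\sum_t[C_t(x_t(s^*))+A_t(s^*_t)]$ and $s^*$ is optimal.

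The main obstacle is the single assertion that $V$ is subdifferentiable at $(a^*,b^*)$. Because tightening an active capacity constraint can render $\mathbf{P}(a,b)$ infeasible, the point $(a^*,b^*)$ may lie on the boundary of the effective domain of $V$, where a convex function need not possess a subgradient. I would handle this by checking that $(a^*,b^*)$ in fact lies in the interior of the region on which $V$ is finite---equivalently, that $\mathbf{P}(a,b)$ stays feasible under all small perturbations of the bounds, a mild constraint qualification that holds whenever the store has positive capacity and the rate constraints leave a little slack---so that the standard relative-interior subdifferentiability of convex functions applies and the construction above goes through.
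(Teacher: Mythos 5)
Your proposal is correct and takes essentially the same route as the paper's proof: both obtain a supporting hyperplane/subgradient $(\alpha^*,\beta^*)$ of the convex value function $V(a,b)$ at $(a^*,b^*)$, set $\lambda^*_t=\alpha^*_t+\beta^*_t$, deduce the sign and complementary slackness conditions (the paper via slack variables $z,w$ and an unboundedness argument, you via monotonicity and local insensitivity of $V$ --- equivalent mechanisms), and establish the converse by the identical termwise sign argument. The only substantive difference is that you explicitly flag the need for a constraint qualification ensuring $V$ is subdifferentiable at $(a^*,b^*)$, a point the paper passes over by simply invoking the supporting hyperplane theorem.
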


\begin{proof}
  Consider the general problem~$\mathbf{P}(a,\,b)$ defined above.
  Introduce slack (or surplus) variables $z=(z_1,\dots,z_t)$ and
  $w=(w_1,\dots,w_t)$ and rewrite $\mathbf{P}(a,\,b)$ as:
  \begin{compactitem}
  \item[$\mathbf{P}(a,\,b)$:] minimise
    \begin{math}
      \sum_{t=1}^T [C_t(x_t(s)) + A_t(s_t)]
    \end{math}
    over all $s=(s_0,\dots,s_T)$ with $s_0=s^*_0$, over all $z\ge0$,
    over all $w\ge0$, and subject to the further constraints
    \begin{align}
      s_t - z_t & = a_t, \qquad 1 \le t \le T, \label{eq:23}\\
      s_t + w_t & = b_t, \qquad 1 \le t \le T, \label{eq:24}
    \end{align}
    and also $x_t(s)\in\X_t$ for $1 \le t \le T$. 
  \end{compactitem}
  Since the function $V(a,\,b)$ is also convex in $a$ and $b$, it
  follows from the supporting hyperplane theorem (see \cite{Boyd} or
  \cite{Whi}), that there exist Lagrange multipliers
  $\alpha^*=(\alpha^*_1,\dots,\alpha^*_T)$ and
  $\beta^*=(\beta^*_1,\dots,\beta^*_T)$ such that
  \begin{equation}
    \label{eq:25}
    V(a,\,b) \ge V(a^*,\,b^*) + \sum_{t=1}^T\alpha^*_t(a_t-a^*_t) +
    \sum_{t=1}^T\beta^*_t(b_t-b^*_t)
    \qquad\text{for all $a$, $b$}
  \end{equation}
  Thus also, for all $s$ with $s_0=s^*_0$ and such that $x_t(s)\in\X_t$
  for $1 \le t \le T$, for all $z\ge0$, and for all $w\ge0$, by
  defining $a$ and $b$ via \eqref{eq:23} and \eqref{eq:24}, we have
  \begin{multline}
    \label{eq:26}
    \sum_{t=1}^T \left[C_t(x_t(s)) + A_t(s_t) - \alpha^*_t(s_t-z_t)
      -\beta^*_t(s_t+w_t)\right]\\
    \ge
    \sum_{t=1}^T \left[C_t(x_t(s^*)) + A_t(s^*_t) - \alpha^*_ta^*_t
      -\beta^*_tb^*_t\right].
  \end{multline}
  Since the components of $z$ and $w$ may take arbitrary positive
  values, we obtain at once the following complementary slackness
  conditions for the vectors of Lagrange multipliers $\alpha^*$ and
  $\beta^*$:
  \begin{align}
    \alpha^*_t \ge 0, \qquad &
    \text{$\alpha^*_t=0$  whenever $s^*_t>a^*_t$},
    \qquad 1\le t \le T,
    \label{eq:27}\\
    \beta^*_t \le 0, \qquad &
    \text{$\beta^*_t=0$  whenever $s^*_t<b^*_t$},
    \qquad 1\le t \le T.
    \label{eq:28}
  \end{align}
  Thus, from \eqref{eq:26}--\eqref{eq:28}, by taking $z_t=w_t=0$ for
  all $t$ on the left side of~\eqref{eq:26}, it follows that, for all
  $s$ with $s_0=s^*_0$ and $x_t(s)\in\X_t$ for $1 \le t \le T$,
  \begin{equation}
    \label{eq:29}
    \sum_{t=1}^T
    \left[C_t(x_t(s)) + A_t(s_t) - (\alpha^*_t+\beta^*_t)s_t \right]
    \ge
    \sum_{t=1}^T
    \left[C_t(x_t(s^*)) +A_t(s^*_t) - (\alpha^*_t+\beta^*_t)s^*_t \right].
  \end{equation}
  The condition~\eqref{con1} of the theorem now follows on defining
  \begin{equation}
    \label{eq:30}
    \lambda^*_t=\alpha^*_t+\beta^*_t,
    \qquad 1 \le t \le T.
  \end{equation}
  while the condition~\eqref{con2} follows from \eqref{eq:30} on using
  also the complementary slackness conditions \eqref{eq:27} and
  \eqref{eq:28}.

  To prove the converse result, suppose that a pair
  $(s^*,\,\lambda^*)$ (with $s_0=s^*_0$) satisfies the
  conditions~\eqref{con1} and \eqref{con2} and that $s^*$ is feasible
  for the problem~$\Ps$.  From the condition~\eqref{con2}, we may
  define (unique) vectors $\alpha^*=(\alpha^*_1,\dots,\alpha^*_T)$ and
  $\beta^*=(\beta^*_1,\dots,\beta^*_T)$ such that the
  conditions~\eqref{eq:27}, \eqref{eq:28} and \eqref{eq:30} hold.  The
  condition~\eqref{con1} of the theorem now translates to the
  requirement that, for all vectors~$s$ such that $s_0=s^*_0$ and
  $x_t(s)\in\X_t$ for all $t$, the relation~\eqref{eq:29} holds.
  Finally, it follows from this and from the conditions~\eqref{eq:27}
  and \eqref{eq:28} that, for any vector $s$ which is feasible for the
  problem~$\Ps$---and so in particular satisfies $0\le s_t\le E_t$ for
  all $t$,
  \begin{equation}
    \label{eq:31}
    \sum_{t=1}^T \left[C_t(x_t(s)) + A_t(s_t) \right]
    \ge
    \sum_{t=1}^T \left[C_t(x_t(s^*)) +A_t(s^*_t) \right],
  \end{equation}
  so that $s^*$ solves the problem~$\Ps$ as required.
\end{proof}

\begin{remark}
  Note that the second part of Theorem~\ref{thm:exist}, i.e.\ the
  converse result, does not require the convexity assumptions on the
  functions~$C_t$ and $A_t$.
\end{remark}

The above Lagrangian theory---which we require for the determination
of the optimal control as described in
Section~\ref{sec:algorithm}---further enables a determination of the
sensitivity of the value of the store with respect to variation of its
capacity constraints.  For the given problem~$\Ps$, the cost of
optimally operating the store (the negative of its value) is given by
$V(a^*,\,b^*)$, where we recall that $a^*$ and $b^*$ are as given by
\eqref{eq:20}.  For any~$t$, the derivative of this optimised cost
with respect to $E_t$, assuming this derivative to exist, is given by
the Lagrange multiplier~$\beta^*_t$ defined in the above proof (the
differentiability assumption ensuring that $\beta_t$ is here uniquely
defined).  Note further that when $s^*_t<E_t$ then
(from~\eqref{eq:28}) the Lagrange multiplier~$\beta^*_t$ is equal to
zero, and when $s^*_t=E_t$ then (from \eqref{eq:27} and \eqref{eq:30})
we have $\beta^*_t=\lambda^*_t$.

A further determination of the sensitivity of the value of the store
with respect to variation of its rate constraint may be developed
along the lines of Theorem~5 of Cruise et al~\cite{CFGZ}, but we do
not pursue this here.

\section{Determination of $(s^*,\lambda^*)$}
\label{sec:algorithm}

The structure of the objective function causes some difficulties for
the solution of the problem~$\Ps$.  As previously observed, a dynamic
programming approach might seem natural but, even for this
deterministic problem, typically remains too computationally
complex---on account of both the likely time-heterogeneity of the
functions $A_t$ and $C_t$, and of the need, even for small $t$, to
consider the problem over the entire time interval~$[0,\,T]$.

We continue to assume convexity of the functions $C_t$ and $A_t$.
Under the further assumption of differentiability of the functions
$A_t$, we give an efficient algorithm for the construction of a pair
$(s^*,\lambda^*)$ satisfying the conditions of
Theorem~\ref{thm:exist}---so that, in particular, $s^*$ solves the
problem~$\Ps$.  This algorithm is further sequential and local in
time, in the sense that the determination of the solution to any given
time~$t'\le T$ typically requires only the consideration of the
problem, i.e.\ a knowledge of the functions $C_t$ and $A_t$, for those
times~$t$ extending to some time horizon which is typically only a
short distance beyond~$t'$.
We have already shown in Section~\ref{sec:simpl-optim-probl} that the
ability to dynamically solve the deterministic problem~$\Ps$, or
updates of this problem, at the times of successive shocks enables an
efficient (stochastically) optimal control of the store.

We give conditions necessarily satisfied by the pair
$(s^*,\lambda^*)$.  Under the further assumption of strict convexity
of the functions~$C_t$, we show how these conditions may be used to
determine $(s^*,\lambda^*)$ uniquely.  We then indicate how the strict/
convexity assumption may be relaxed.

\begin{proposition}
  \label{prop:existnu}
  Suppose that the functions~$A_t$ are differentiable, and that the
  pair~$(s^*,\lambda^*)$ is such that $s^*$ is feasible for the
  problem~$\Ps$, while $(s^*,\lambda^*)$ satisfies the
  condition~(\ref{con2}) of Theorem~\ref{thm:exist}.  For each~$t$
  define
  \begin{equation}
    \label{eq:32}
    \nu^*_t = \sum_{u=t}^T[\lambda^*_u - A'_u(s^*_u)].
  \end{equation}
  Then the condition that $(s^*,\lambda^*)$ satisfies the
  condition~(\ref{con1}) of Theorem~\ref{thm:exist} is equivalent to the
  condition that
  \begin{equation}
    \label{eq:33}
    \text{$x_t(s^*)$ minimises $C_t(x)-\nu^*_tx$ in $x\in\X_t$},
    \qquad 1 \le t \le T.
  \end{equation}
\end{proposition}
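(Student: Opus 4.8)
The plan is to recast condition~(\ref{con1}) as an unconstrained-in-level minimisation in the \emph{increment} variables, where the coupling introduced by the functions $A_t$ can be removed by a single interchange of summation. First I would pass from the level variables $s=(s_1,\dots,s_T)$ to the increments $x=(x_1,\dots,x_T)$ via $x_t=s_t-s_{t-1}$; since $s_0=s^*_0$ is fixed this is a bijection, under which $s_t=s^*_0+\sum_{u\le t}x_u$ and under which the admissible set in condition~(\ref{con1})---all $s$ with $s_0=s^*_0$ and $x_t(s)\in\X_t$, otherwise unconstrained---becomes simply the product $\prod_{t=1}^T\X_t$. Writing $g(x)=\sum_{t=1}^T[C_t(x_t)+A_t(s_t)-\lambda^*_t s_t]$, condition~(\ref{con1}) is then exactly the assertion that $x^*:=(x_1(s^*),\dots,x_T(s^*))$ minimises the convex function $g$ over this product set. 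The computation driving everything is the identity
\[
  \sum_{t=1}^T (A'_t(s^*_t)-\lambda^*_t)(s_t-s^*_t) = -\sum_{u=1}^T \nu^*_u\,(x_u-x^*_u),
\]
obtained by writing $s_t-s^*_t=\sum_{u\le t}(x_u-x^*_u)$, interchanging the order of summation, and recognising $\sum_{t\ge u}(\lambda^*_t-A'_t(s^*_t))$ as $\nu^*_u$ from the definition~\eqref{eq:32}.

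For the sufficiency direction (that~\eqref{eq:33} implies~(\ref{con1})) I would use the tangent-line bound $A_t(s_t)\ge A_t(s^*_t)+A'_t(s^*_t)(s_t-s^*_t)$, valid by convexity and differentiability of $A_t$. Substituting this into $g(x)-g(x^*)$ and applying the identity above collapses the $A$- and $\lambda$-contributions, giving
\[
  g(x)-g(x^*) \ge \sum_{u=1}^T \Bigl[\bigl(C_u(x_u)-\nu^*_u x_u\bigr)-\bigl(C_u(x^*_u)-\nu^*_u x^*_u\bigr)\Bigr],
\]
each summand of which is nonnegative precisely by~\eqref{eq:33}; hence $g(x)\ge g(x^*)$ for all feasible $x$.

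For the necessity direction (that~(\ref{con1}) implies~\eqref{eq:33}) I would fix a single coordinate $u$ and perturb only $x_u$, replacing $x^*_u$ by $x^*_u+\epsilon\delta$ with $\delta=\pm1$ and $\epsilon>0$ small enough to stay in $\X_u$; the product structure of the feasible set guarantees the perturbed point remains admissible. Condition~(\ref{con1}) gives $g\ge g(x^*)$, and dividing this increment by $\epsilon$ and letting $\epsilon\downarrow0$ yields, via differentiability of the $A_t$ (which contribute $\delta\sum_{t\ge u}A'_t(s^*_t)$) and the one-sided derivatives of the convex $C_u$, exactly the first-order optimality condition for $x^*_u$ against $C_u(x)-\nu^*_u x$ on $\X_u$.

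The main obstacle I anticipate is bookkeeping rather than conceptual: because the $C_t$ are assumed only convex and not differentiable, the necessity argument must be carried out through one-sided (left and right) derivatives, and the endpoints $x^*_u\in\{-P_{Ou},\,P_{It}\}$ of $\X_u$ must be handled separately, since at a boundary only one perturbation direction is available and the associated optimality requirement is correspondingly one-sided. Verifying that the interior, lower-boundary and upper-boundary cases assemble exactly into the single statement ``$x_t(s^*)$ minimises $C_t(x)-\nu^*_t x$ in $x\in\X_t$'' is the step demanding the most care; the remaining manipulations are routine once the change to increment variables and the summation interchange have been set up.
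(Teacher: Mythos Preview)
Your proposal is correct and follows essentially the same route as the paper. For the necessity direction you both perturb a single increment $x_u$ (the paper writes this as varying $h$ in its expression~(34)) and extract the one-sided optimality conditions from the $A_t$-derivatives and the convexity of $C_t$; your explicit treatment of the boundary cases $x^*_u\in\{-P_{Ou},P_{Iu}\}$ via one-sided derivatives is exactly the bookkeeping the paper leaves implicit. For the sufficiency direction your argument is in fact cleaner than the paper's: where the paper asserts that the one-dimensional minima of~(34) at $h=0$ make it ``straightforward'' that the hyperplane with slopes $\lambda^*$ supports the objective, you give a direct inequality via the tangent-line bound for $A_t$ and the summation interchange, which decomposes $g(x)-g(x^*)$ into a sum of separable nonnegative terms. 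This explicit decomposition is a genuine improvement in clarity, since for non-differentiable convex functions coordinate-wise minimality on a product domain does not automatically imply global minimality, and your tangent-line argument sidesteps that subtlety entirely.
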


\begin{proof}
  Assume that the pair $(s^*,\lambda^*)$ is as given.  Suppose first
  that additionally $(s^*,\lambda^*)$ satisfies the
  condition~(\ref{con1}) of Theorem~\ref{thm:exist}.  The
  condition~\eqref{eq:33} is then straightforward when the
  functions~$C_t$ are additionally differentiable: for each~$t$ the
  partial derivative of the left side of \eqref{eq:21} with respect to
  $x_t(s)$ (with $x_u(s)$ being kept constant for $u\neq t$) is
  necessarily zero at $s=s^*$, so that~\eqref{eq:33} follows from the
  assumed convexity of the functions~$C_t$.  For the general case,
  note that it follows from the condition~\eqref{con1} of
  Theorem~\ref{thm:exist} (by considering $s$ such that $s_0=s^*_0$ ,
  $x_t(s)=x_t(s^*)+h$, $x_u(s)=x_u(s^*)$ for $u\ne t$), that, for
  all~$t$ and for all real~$h$,
  \begin{equation}
    \label{eq:34}
    C_t(x_t(s^*)+h) + \sum_{u=t}^T[A_u(s^*_u+h) - \lambda^*_uh] 
  \end{equation}
  is minimised at $h=0$, and so, for all (small)~$h$,
  \begin{equation}
    \label{eq:35}
    C_t(x_t(s^*)+h) - \nu_th \ge C_t(x_t(s^*)) + o(h),
    \quad\text{as $h\to0$}.
  \end{equation}
  Thus~\eqref{eq:33} again follows from the assumed convexity of the
  functions~$C_t$.

  To prove the converse result, suppose now that $(s^*,\lambda^*)$
  satisfies the condition~\eqref{eq:33}.  This condition, together
  with the convexity and differentiability of the functions~$A_t$,
  then implies that, for all~$t$, the expression \eqref{eq:34} is
  minimised at $h=0$.
  It is now straightforward that the hyperplane in $\R^T$ whose vector
  of slopes is $\lambda^*$ supports the function
  $\sum_{t=1}^T\left[C_t(x_t(s))+A_t(s_t)\right]$ at the point
  $(s^*,\,\sum_{t=1}^T[C_t(x_t(s^*))+A_t(s^*_t)])$, so that finally
  the condition~\eqref{con1} of Theorem~\ref{thm:exist} holds as
  required.
\end{proof}

It now follows from Theorem~\ref{thm:exist} and
Proposition~\ref{prop:existnu} that if the pair~$(s^*,\lambda^*)$ is
such that $s^*$ is feasible for the problem~$\Ps$, and that
$(s^*,\lambda^*)$ satisfies the both condition~\eqref{eq:33} and the
condition~(\ref{con2}) of Theorem~\ref{thm:exist}, then $s^*$ further
solves the problem~$\Ps$.

We now show how to construct such a pair $(s^*,\lambda^*)$.  We
assume, for the moment, strict convexity of the functions~$C_t$; we
subsequently indicate how to relax this assumption.  It follows from
the assumed strict convexity that, for each $t$ and for each $\nu_t$,
there is a unique $x\in\X_t$, which we denote by $x^*_t(\nu_t)$, which
minimises $C_t(x)-\nu_tx$ in $\X_t$.  Further $x^*_t(\nu_t)$ is
continuous and increasing in $\nu_t$---strictly so for $\nu_t$ such
that $x^*_t(\nu_t)$ lies in the interior of $\X_t$.  In particular,
from~\eqref{eq:13}, the condition~\eqref{eq:33} may now be rewritten
as
\begin{equation}
  \label{eq:36}
  s^*_t = s^*_{t-1} + x^*_t(\nu^*_t),
  \qquad 1 \le t \le T.
\end{equation}
It further follows from \eqref{eq:32} that
\begin{equation}
  \label{eq:37}
  \nu^*_{t+1} = \nu^*_t + A'_t(s^*_t) - \lambda^*_t.
  \qquad 1 \le t \le T-1.
\end{equation}

Thus, were the vector~$\lambda^*$ known, together with the value of
the constant~$\nu^*_1$, the pair $(s^*,\,\nu^*)$ could be constructed
sequentially via \eqref{eq:36} and \eqref{eq:37}.  We observe that,
while $\lambda^*$ is not known, it does satisfy the
conditions~\eqref{eq:22} and in particular the requirement that
$\lambda^*_t=0$ for all $t$ such that $0<s^*_t<E_t$.  We now follow a
procedure which is a generalisation of one described by Cruise et
al~\cite{CFGZ}, and which involves an essentially one-dimensional
search so as to identify the constant~$\nu^*_1$.  This search, which
may be thought of as being carried out at time zero and which is not
computationally intensive (see the further remarks at the end of this
section), then needs to be repeated at each of a number of subsequent
times as described below.  We show how to define inductively a
sequence of times $0=T_0<T_1<\dots<T_k=T$ such that $s^*(T_i)=0$ or
$s^*(T_i)=E_{T_i}$ for $1\le i\le k$ and such that $\lambda^*_t=0$ for
all values of $t$ not in the above sequence.

The time $T_1$ is chosen as follows.  Consider \emph{trial} values
$\nu_1$ of $\nu^*_1$.
For each such $\nu_1$, define a pair of vectors
$\nu=(\nu_1,\dots,\nu_T)$ and $s=(s_1,\dots,s_T)$ by
\begin{align}
  s_t & = s_{t-1} + x^*_t(\nu_t),
  \qquad 1 \le t \le T, \label{eq:38}\\
  \nu_{t+1} & = \nu_t + A'_t(s_t),
  \qquad 1 \le t \le T-1. \label{eq:39}
\end{align}

Define $M$ and $M'$ to be the sets of values of $\nu_1$ for which the
vector $s$ defined via \eqref{eq:38} and \eqref{eq:39} violates one of
the capacity constraints~\eqref{eq:15} and first does so respectively
below or above---in either case at a time which we denote by
$\overline T_1(\nu_1)$.  Since, for each $t$, $x^*_t(\nu_t)$ is
increasing in $\nu_t$ and $A'_t(s_t)$ is increasing in $s_t$ (by the
convexity of $A_t$), it follows that if $\nu_1\in M$ then
$\nu'_1\in M$ for all $\nu_1'<\nu_1$ and that if $\nu_1\in M'$ then
$\nu_1'\in M'$ for all $\nu_1'>\nu_1$; further the sets $M$ and $M'$
are disjoint, and (since the solution set for the problem~$\Ps$ is
nonempty) neither the set $M$ nor the set $M'$ can be the entire real
line.  Let $\bar\nu_1=\sup M$.  (In the extreme case where $M$ is
empty we may set $\bar\nu_1=-\infty$).  We now consider the behaviour
of the corresponding vector~$s$ defined via \eqref{eq:38} and
\eqref{eq:39} where we take $\nu_1=\bar\nu_1$; for this vector~$s$
there are three possibilities:
\begin{compactenum}[(a)]
\item the quantity $\bar\nu_1$ belongs neither to the set $M$ nor to
  the set $M'$, i.e.\ the vector $s$ generated as above is feasible
  for the problem~$\Ps$; in this case we take $T_1=T$ and $s^*=s$ with
  $\nu^*_1=\bar\nu_1$ and $\lambda^*_t=0$ for $1\le t\le T-1$ (so that
  the remaining values of $\nu^*$ are given by \eqref{eq:37});
\item the quantity $\bar\nu_1$ belongs to the set $M$; in this case
  there exists at least one $t<\overline T_1(\bar\nu_1)$ such that
  $s_t=E_t$ (were this not so then, by the continuity of each
  $x^*_t(\nu_t)$ in $\nu_t$, the value of $\nu_1$ could be increased
  above $\bar\nu_1$ while remaining within the set $M$); define $T_1$
  to be any such $t$, say the largest, and take $s^*_t=s_t$ for
  $1\le t\le T_1$ with $\nu^*_1=\bar\nu_1$ and $\lambda^*_t=0$ for
  $1\le t\le T_1-1$;
\item the quantity $\bar\nu_1$ belongs to the set $M'$; in this case,
  similarly to the case (b), there exists at least one
  $t<\overline T_1(\bar\nu_1)$ such that $s_t=0$; define $T_1$ to be
  any such $t$, again say the largest, and again take $s^*_t=s_t$ for
  $1\le t\le T_1$ with $\nu^*_1=\bar\nu_1$ and $\lambda^*_t=0$ for
  $1\le t\le T_1-1$.
\end{compactenum}

In each of the cases~(b) and (c) above, we now repeat the above
procedure, starting at the time~$T_1$ instead of the time~$0$, and
considering trial values of $\nu^*_{T_1+1}$, thereby identifying
$\nu^*_{T_1+1}$, the time~$T_2$ and the values of $s^*_t$ for
$T_1+1\le t\le T_2$, and taking $\lambda^*_t=0$ for
$T_1+1\le t\le T_2-1$.  The quantity~$\lambda^*_{T_1}$ is now defined
via \eqref{eq:37}.  Further consideration of the sets $M$ and $M'$
defined above in relation to the identification of $\nu^*_1=\bar\nu_1$
shows easily that in the case $\bar\nu_1\in M$---so that
$s^*_{T_1}=E_{T_1}$---the quantity $\nu^*_{T_1+1}=\bar\nu_{T_1+1}$ is
necessarily such that $\lambda^*_{T_1}\ge0$ (since in this case, by
the above construction, the quantity $\nu^*_{T_1+1}$ has a value which
is necessarily at least as great as would have been the case had
$\lambda^*_{T_1}$ been equal to $0$), whereas in the case
$\bar\nu_1\in M'$---so that $s^*_{T_1}=0$---the quantity
$\nu^*_{T_1+1}=\bar\nu_{T_1+1}$ is necessarily such that
$\lambda^*_{T_1}\le0$.

For $T_2\neq T$ we continue in this manner until the entire sequence
$0=T_0<T_1<\dots<T_k=T$ is identified.  We thus obtain vectors~$s^*$,
$\lambda^*$ and $\nu^*$ such that $s^*$ is feasible for the
problem~$\Ps$, while $(s^*,\lambda^*)$ satisfies the
condition~\eqref{eq:33} and the condition~(\ref{con2}) of
Theorem~\ref{thm:exist} and so solves the problem~$\Ps$ as required.

In the case where, for at least some $t$, the cost function $C_t$ is
convex, but not necessarily strictly so, some extra care is required.
Here, for such $t$, the function $\nu\rightarrow x^*_t(\nu)$ is not in
general uniquely defined; further, for any given choice, this function
is not in general continuous.  However, the above construction of
$(s^*,\lambda^*)$ continues to hold provided that, where necessary, we
choose the right value of $x^*_t(\nu)$.  The latter may always be
identified by considering, for example, a sequence of strictly convex
functions $C^{(\epsilon)}_t$ converging to $C_t$ and identifying
$x^*_t(\nu)$ as the limit of its corresponding values within this
sequence.

Note that the above construction proceeds locally in time, in the
sense that, at each successive time~$T_i$, the determination of the
subsequent time~$T_{i+1}$ and of the values of $s^*_t$ and $\nu^*_t$
for $T_i+1\le t\le T_{i+1}$ only requires consideration of the
functions~$C_t$ and $A_t$ up to some time $\overline T_{i+1}$
(necessarily beyond $T_{i+1}$) the identification of which does not
depend on the functions $C_t$ and $A_t$ at any subsequent times.  More
precisely we have $\overline T_1=\overline T_1(\bar\nu_1)$, where
$\overline T_1(\bar\nu_1)$ is as identified above, and the remaining
$\overline T_i$, $2\le i\le k$, are similarly identified.  In
particular we have that, for each time~$t$ and given $s^*_{t-1}$, the
optimal choice of store level~$s^*_t$ depends only on the
functions~$C_{t'}$ and $A_{t'}$ for $t\le t'\le\overline T(t)$ where
we define $\overline T(t)=\overline T_{i+1}$ for $i$ such that
$T_i+1\le t\le T_{i+1}$.  The function $\overline T(t)$ is piecewise
constant in~$t$, and so the \emph{time horizon} or \emph{look-ahead
  time} $\overline T(t)-t$ required for the optimal decision at each
time~$t$ has the ``sawtooth'' shape which we illustrate in our
examples of Section~\ref{sec:example}.

Note further that a lengthening of the total time $T$ over which the
optimization is to be performed does not in general change the values
of the times $T_i$, but rather simply creates more of them.  In
particular the solution to the problem~$\Ps$ involves computation
which grows essentially linearly in $T$, and the algorithm is suitable
for the management of a store with an infinite time horizon.

The typical length of the intervals between the successive times $T_i$
depends on the shape of the functions $C_t$ and $A_t$ and in
particular on the rate at which they fluctuate in time.  Thus, for
example, the long-run management of a store for which the
functions~$C_t$ show strong daily fluctuations typically involves
decision making on a running time horizon of the order of a day or so.

Finally note that, as already indicated, in the implementation of the
above construction, some form of one-dimensional search is usually
required to determine each of the successive $\bar\nu_{T_i+1}$: each
trial value of this quantity provides either an upper or lower bound
to the true value, so that, for example, a simple binary search is
sufficient.   Given also the ``locality'' property referred to above,
the numerical effort involved in the implementation of the above
algorithm is usually very slight.

\section{Examples}
\label{sec:example}

We give some examples, in which we solve (exactly) the optimal control
problem $\Ps$ formally defined in Section~\ref{sec:simpl-optim-probl}.
We investigate how the optimal solution depends on the cost
functions~$C_t$ defined there which reflecting buying and selling
costs and hence the opportunity to make money from price arbitrage,
and on the functions~$A_t$ which reflect the costs of providing
buffering services.

The cost functions~$C_t$ are derived from half-hourly electricity
prices in the Great Britain spot market over the entire year 2011,
adjusted for a modest degree of market impact, as described in detail
below.  Thus we work in half-hour time units, with the time
horizon~$T$ corresponding to the number of half-hour periods in the
entire year.  These spot market prices show a strong daily cyclical
behaviour (corresponding to daily demand variation), being low at
night and high during the day.  This price variation can be seen in
Figure~\ref{fig:price} which shows half-hourly GB spot prices (in
pounds per megawatt-hour) throughout the month of March 2011.  There
is a similar patter of variation throughout the rest of the year.

\begin{figure}
  \centering
  \includegraphics[scale=0.7]{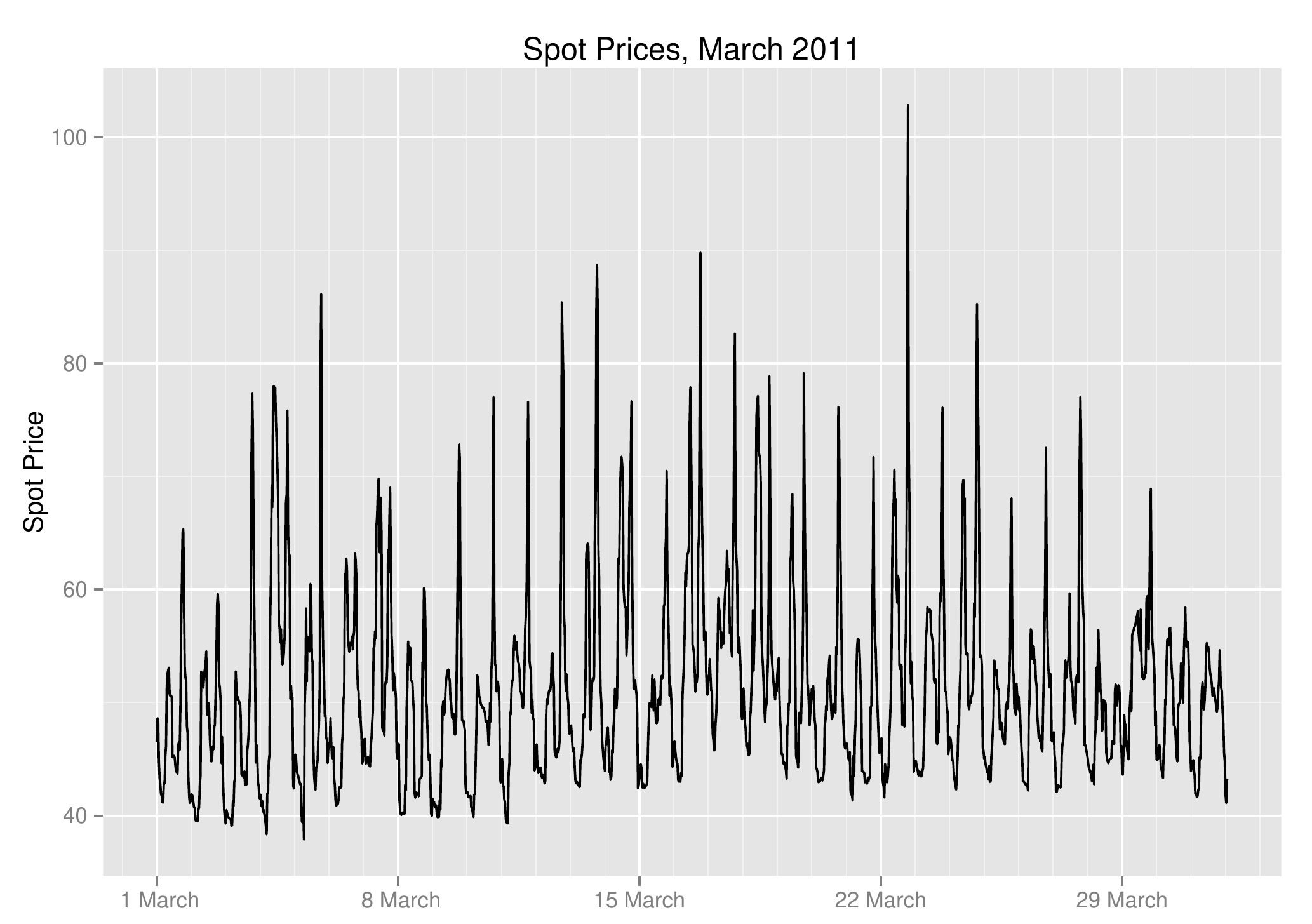}
  \caption{GB half-hourly spots prices (\pounds/MWh) for March 2011.}
  \label{fig:price}
\end{figure}

Without loss of generality, we choose energy units such that the rate
(power) constraints are given by $P_{It}=P_{Ot}=1$ unit of energy per
half-hour period.  For illustration, we take the capacity of the store
to be given by $E=10$ units of energy; thus the store can completely
fill or empty over a 5-hour period, which is the case, for example,
for the large Dinorwig pumped storage facility in Snowdonia \cite{Din}.

We choose cost functions $C_t$ of the form
\begin{equation}
  \label{eq:40}
  C_t(x) =
  \begin{cases}
    c_t x (1+\delta x), & \quad\text{if $x\ge0$}\\
    \eta c_t x (1+\delta x), & \quad\text{if $x<0$},
  \end{cases}
\end{equation}
where the $c_t$ are proportional to the half-hourly electricity spot
prices referred to above, where $\eta$ is an adjustment to selling
prices representing in particular round-trip efficiency as described
in Section~\ref{sec:problem}, and where the factor $\delta>0$ is
chosen so as to represent a degree of market impact (higher unit
prices as the store buys more and lower unit prices as the store sells
more).  For our numerical examples we take $\eta=0.85$ which is a
typical round-trip efficiency for a pumped-storage facility such as
Dinorwig.  We choose $\delta=0.05$; since the rate constraints for the
store are $P_{It}=P_{Ot}=1$ this corresponds to a maximum market
impact of~$5$\%.  While this is modest, our results are qualitatively
little affected as $\delta$ is varied over a wide range of values less
than one, covering therefore the range of possible market impact
likely to be seen for storage in practice.

Finally we need to choose the functions~$A_t$ reflecting the costs of
providing buffering services.  Our aim here is to give an
understanding of how the optimal control of the store varies according
to the relative economic importance of cost arbitrage and buffering,
i.e.\ according to the relative size of the functions~$C_t$ and $A_t$.
We choose functions~$A_t$ which are constant over time~$t$ and of the
form $A_t(s)=ae^{-\kappa s}$ and $A_t(s)=b/s$ for a small selection of
the parameters $a$, $\kappa$ and $b$.  The extent to which a store
might provide buffering services in applications is extremely varied,
and so the likely balance between arbitrage and buffering cannot be
specified in advance.  Rather we choose just sufficient values of the
above parameters to show the effect of varying this balance.  For a
possible justification of the chosen forms of the functions~$A_t$
(including why it should not necessarily be truncated to $0$ for
values of $s$ greater than the rate constraint of $1$), see
Section~\ref{sec:determ-funct-a_t-2}; in particular the form
$A_t(s)=ae^{-\kappa s}$ is plausible in the case of light-tailed
shocks, while the form $A_t(s)=b/s$ shows the effect of a slow rate of
decay in~$s$.

In each of our examples, we determine the optimal control of the store
over the entire year, with both the initial level~$S^*_0$ and the
final level~$S^*_T$ given by $S^*_0=S^*_T=0$.  In each of the
corresponding figures, the upper panel shows the optimally controlled
level of the store throughout the month of March.  The lower panel
shows, for each time~$t$ in the same month, the time horizon (or
look-ahead time) $\overline T(t)-t$, defined in
Section~\ref{sec:algorithm}, i.e.\ the length of time beyond the
time~$t$ for which knowledge of the cost functions is required in
order to make the optimal decision at time~$t$.

Figure~\ref{fig:exp} shows the optimal control of the store when the
functions~$A_t$ are given by $A_t(s)=ae^{-\kappa s}$.  The uppermost
panels correspond to $a=0$, so that the store incurs no penalty for
failing to provide buffering services and optimises its control solely
on the basis of arbitrage between energy prices at different times.
The daily cycle of prices is sufficiently pronounced that here the
store fills and empties---or nearly so---on a daily basis,
notwithstanding the facts that the round-trip efficiency of $0.85$ is
considerably less than $1$ and that the minimum time for the store to
fill or empty is $5$ hours.  It will be seen also that the time
horizon, or look-ahead time, required for the determination of optimal
decisions is in general of the order of one or two days.  

\begin{figure}
  \centering
  \includegraphics[scale=0.8]{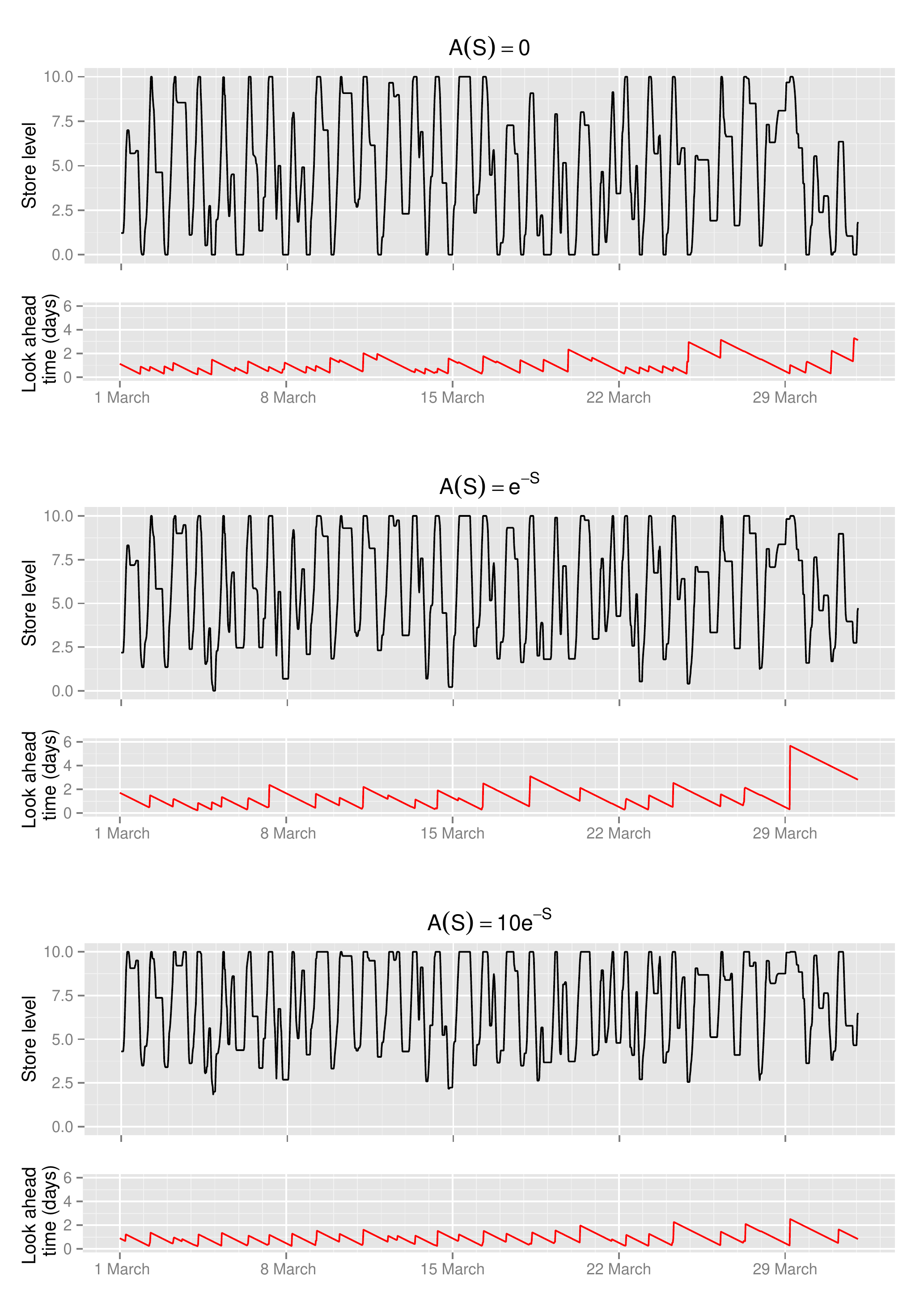}
  \caption{Store level and time horizon throughout March 2011 for the
    example with $A_t(s)=ae^{-\kappa s}$.  The top panels correspond
    to $a=0$, the central panels to $a=1$, $\kappa=1$, and the bottom
    panels to $a=10$, $\kappa=1$.}
  \label{fig:exp}
\end{figure}

The central panels of Figure~\ref{fig:exp} correspond to $\kappa=1$
and $a=1$.  The choice of $a$ in particular is such that the store is
just sufficiently incentivised by the need to reduce buffering costs
that it rarely empties completely (though it does so very
occasionally).  Otherwise the behaviour of the store is very similar
to that in the case $a=0$.  Note also that in this case the time
horizons or look-ahead times are in general somewhat longer; an
intuitive explanation (backed by a careful examination of the figure)
is that, starting from a time when the store is full, the
determination of by how much the store should avoid emptying
completely requires taking account of the cost functions for a longer
period of future time than is the case where the store does empty
completely.

Finally the bottom two panels of Figure~\ref{fig:exp} correspond to
$\kappa=1$ and $a=10$.  Here the costs of failing to provide buffering
services are much higher, and so the optimised level of the store
rarely falls below 25\% of its capacity.  Curiously the look-ahead
times are in general less than in the case $a=1$---presumably since
the store level is more often reaching the capacity constraint.

Variation of the exponential parameter~$\kappa$ does not result in
dramatically different behaviour, so we do not pursue this here.

Figure~\ref{fig:inv} shows the optimal control of the store when the
functions~$A_t$ are given, for each~$t$, by $A_t(s)=b/s$.  The upper
panels correspond to $b=0$, so that we again have $A_t(s)=0$ for all
$s$ and the control is as observed previously.  The lower panels
correspond to the case $b=1$, and, as might be expected, the behaviour
here is somewhat intermediate between that for the two nonzero
exponentially decaying exponential functions.

\begin{figure}[!h]
  \centering
  \includegraphics[scale=0.8]{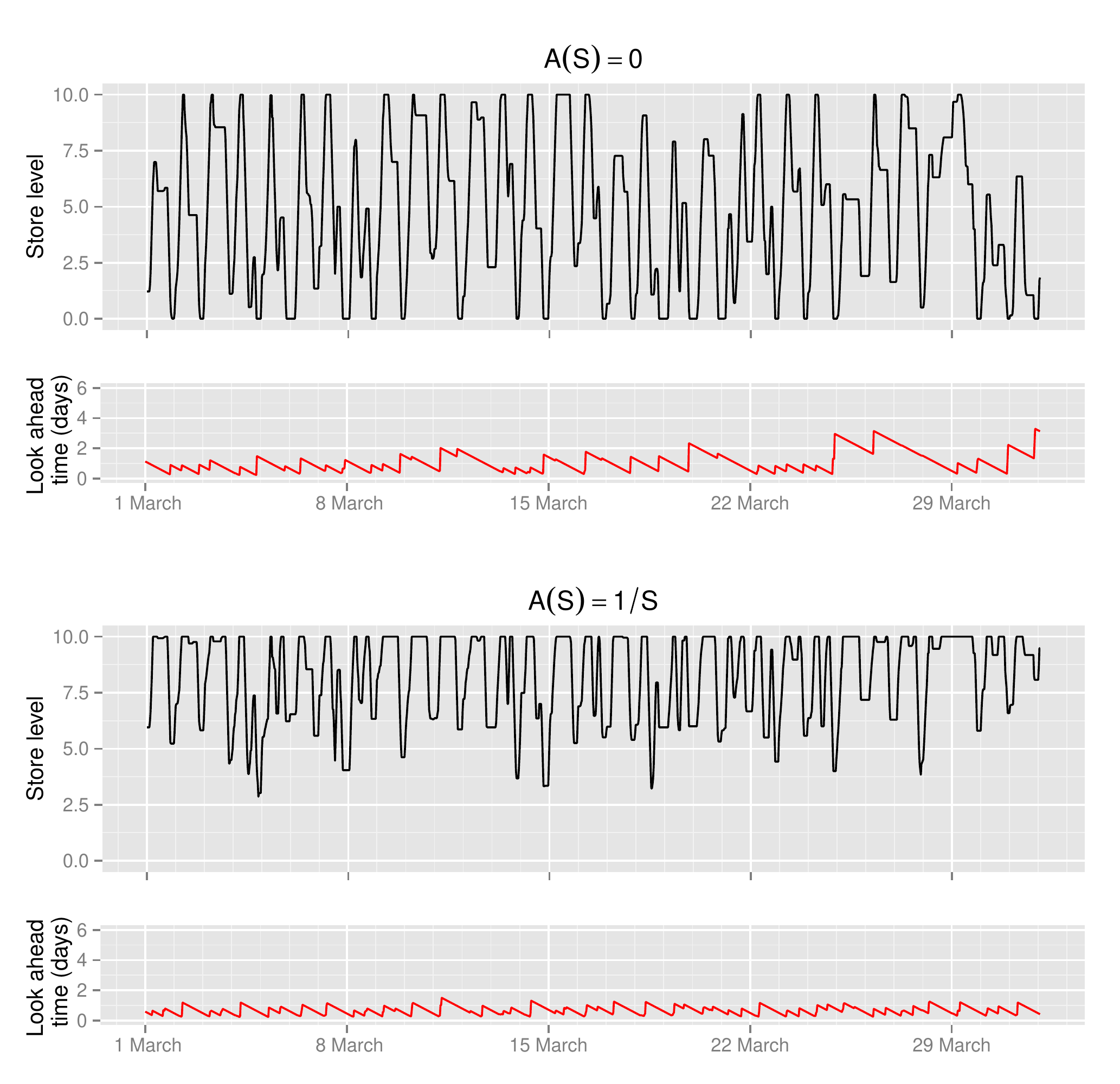}
  \caption{Store level and time horizon throughout March 2011 for the
    example with $A_t(s)=b/s$.  The upper panels correspond to
    $A_t(s)= 0$ and the lower panels to $A(t)= 1/s$.}
  \label{fig:inv}
\end{figure}



\section*{Acknowledgements}
\label{sec:acknowledgements}
The authors acknowledge the support of the Engineering and Physical
Sciences Research Council for the programme (EPSRC grant EP/I017054/1)
under which the present research is carried out.  They also
acknowledge the benefit of very helpful discussions with numerous
colleagues, notably: Janusz Bialek, Chris Dent, Lisa Flatley, Richard
Gibbens, Frank Kelly and Phil Taylor.

\newpage  
\bibliographystyle{ieeetr}

\bibliography{refs}

\end{document}